\documentclass[preprint,12pt]{elsarticle}

\usepackage{algorithmic}
\usepackage{algorithm}
\usepackage{graphicx}
\usepackage{amsmath,amsfonts,amssymb}
\usepackage{epsfig,makecell,float}
\usepackage{setspace,mathrsfs}
\usepackage{tocloft}
\usepackage{textcomp}
\usepackage{multirow,indentfirst,times,color}
\usepackage[caption=false,font=footnotesize]{subfig}
\usepackage{booktabs}
\usepackage{threeparttable}
\usepackage{amsthm}
\usepackage{extarrows}
\usepackage{bm}

\renewcommand{\arraystretch}{1.5}
\usepackage[titletoc]{appendix}
\usepackage{epstopdf}
\usepackage{natbib}

\usepackage{todonotes}
\usepackage{hyperref}

\biboptions{numbers,sort&compress}
\newtheorem{thm}{Theorem}
\newtheorem{defn}{Definition}

\newtheorem{lem}{Lemma}
\newtheorem{corollary}{Corollary}

\begin{document}

\begin{frontmatter}



\title{The uncertainty principles of random signals related to the  linear canonical transform} 

\author[a,b]{Jia-Yin Peng} 
\author[a,b]{Bing-Zhao Li\corref{mycorrespondingauthor}}
\cortext[mycorrespondingauthor]{Corresponding author}\ead{li\_bingzhao@bit.edu.cn}

\affiliation[a]{organization={School of Mathematics and Statistics, Beijing Institute of Technology},
	city={Beijing},
	postcode={100081}, 
	country={China}}

\affiliation[b]{organization={Beijing Key Laboratory on MCAACI, Beijing Institute of Technology},
	city={Beijing},
	postcode={100081}, 
	country={China}}

\begin{abstract}
	In this paper, we investigate uncertainty principles for random signals associated with the linear canonical transform (LCT). First, the LCT of random signals is formulated on the probability space. Based on this representation, the Heisenberg uncertainty principle is established to characterize the relationship between the expectations in the time and frequency domains. Furthermore, the Donoho-Stark uncertainty principle, developed from a measure theoretic perspective, reveals that a random signal cannot be simultaneously concentrated on arbitrarily small sets in both the time and frequency domains. The bounds obtained in these two uncertainty principles explicitly depend on the LCT parameters, indicating that the LCT offers greater flexibility than the Fourier transform (FT). The corresponding results in the fractional Fourier transform and FT domains are also given as special cases.
\end{abstract}



\begin{keyword}
Uncertainty principle\sep Random signals\sep Linear canonical transform\sep  Fractional Fourier transform
\end{keyword}

\end{frontmatter}

\section{Introduction}
Uncertainty principles constitute an  important topic in harmonic analysis and were originally rooted in quantum mechanics. The celebrated Heisenberg uncertainty principle (HUP), proposed by the German physicist Heisenberg, states that the position and momentum of an electron cannot be measured simultaneously with arbitrary precision \cite{heisenberg}. From the perspective of signal processing, the HUP reveals a fundamental trade-off between the energy distribution of a signal in the time and frequency domains \cite{2026Buncertainty}. In mathematical terms, for a signal $f(t)$ and its Fourier transform (FT) $\hat{f}(\xi)$, the time duration $\Delta f(t)$ and the frequency bandwidth $\Delta \hat{f}(\xi)$ satisfy the following inequality \cite{HUN}
\begin{equation}
\Delta f(t)\Delta \hat{f}(\xi) \geq \frac{1}{2}.
\label{eq:heisenberg}
\end{equation}
This principle has inspired extensive subsequent studies on uncertainty principles and serves as a significant tool in time-frequency analysis \cite{zhang2025H, hw, FrFtLp}. As a representative development, Donoho and Stark studied time and frequency limitations from the viewpoint of measurable sets \cite{dsu}. Their result shows that a nonzero signal and its FT cannot both be concentrated on sets with arbitrarily small measures. 
More precisely, if a signal $f(t)$ is $\varepsilon_T$-concentrated on a measurable set $T$ and its FT $\hat{f}(\xi)$ is $\varepsilon_W$-concentrated on another measurable set $W$, with $\varepsilon_T,\varepsilon_W\geq 0$ and $\varepsilon_T+\varepsilon_W<1$, then \cite{DStwo}
\begin{equation}
|T||W|
\geq
\left(1-\varepsilon_T-\varepsilon_W\right)^2 .
\label{DS}
\end{equation}
The Donoho-Stark uncertainty principle (DSUP) provides a useful theoretical tool for signal recovery from incomplete, missing and bandlimited observations \cite{DSE, dslp, tuan2010donoho}.

The above uncertainty principles and their extensions have led to a substantial body of theoretical results, most of which have been developed for deterministic signals that depend only on the time variable $t$ \cite{uncertaintyhj, uwlct}. However, real signals are often accompanied by random effects caused by measurements, environments or perturbations. 
Therefore, the study of random signals is of significant practical importance in signal processing \cite{randomqt, frftrandom}. 
In general, a random signal can be represented as $f(t,\eta)$, where the random parameter $\eta$ is defined on the probability space. 
Since $f(t,\eta)$ reduces to a deterministic signal for each fixed $\eta$, random signals can be regarded as a generalization of deterministic signals \cite{RandomFrFt, xurandom}.

In recent years, uncertainty principles for random signals have begun to attract increasing attention. Dang et al. investigated HUP and DSUP for random signals \cite{dang2023}. More precisely, they established a HUP for random signals in the FT domain, which can be expressed as
\begin{equation}
\mathbb{E}_{\eta}\!\left[\sigma_t^2(\eta)\right]
\mathbb{E}_{\eta}\!\left[\sigma_{\xi}^2(\eta)\right]
\geq
\frac{1}{4}
+
\left[
\int_{\Omega}\int_{-\infty}^{+\infty}
\left|
(t-\mu_t)
\left(
\frac{\partial\theta(t,\eta)}{\partial t}
-
\mu_{\xi}
\right)
\right|
\psi^2(t,\eta)\,dt\,dP
\right]^2,
\label{HUPFT}
\end{equation}
where
\begin{equation}
\mathbb{E}_{\eta}\!\left[\sigma_t^2(\eta)\right]
=
\int_{\Omega}\int_{-\infty}^{+\infty}
(t-\mu_t)^2 |f(t,\eta)|^2\,dt\,dP,
\label{et}
\end{equation}
and
\begin{equation}
\mathbb{E}_{\eta}\!\left[\sigma_{\xi}^2(\eta)\right]
=
\int_{\Omega}\int_{-\infty}^{+\infty}
(\xi-\mu_{\xi})^2 |\hat{f}(\xi,\eta)|^2\,d\xi\,dP.
\label{ef}
\end{equation}
Here, $\mu_t$ and $\mu_{\xi}$ denote the mean time and the mean Fourier frequency, respectively, given by
\begin{equation}
\mu_t
=
\int_{\Omega}\int_{-\infty}^{+\infty}
t |f(t,\eta)|^2\,dt\,dP,
\label{mt}
\end{equation}
and
\begin{equation}
\mu_{\xi}
=
\int_{\Omega}\int_{-\infty}^{+\infty}
\xi |\hat{f}(\xi,\eta)|^2\,d\xi\,dP.
\label{mf}
\end{equation}
In addition, they established a DSUP for random signals in the FT domain. More precisely, let $(\Omega,\mathcal{F},P)$ be a probability space, $B_T\times T$ and $B_W\times W$ are measurable sets in $\Omega \times \mathbb{R}$. 
If $f(t,\eta)\in L^2(\mathbb{R}\times\Omega)$ is $\varepsilon_{B_T\times T}$-concentrated on $B_T\times T$ and $\hat f(\xi,\eta)$ is $\varepsilon_{B_W\times W}$-concentrated on $B_W\times W$, then
\begin{equation}
|T||W|\,P(B_T\cap B_W)
\geq
\left(
1-\varepsilon_{B_T\times T}
-\varepsilon_{B_W\times W}
\right)^2 .
\label{DSFT}
\end{equation}
Here, $|T|$ and $|W|$ denote the Lebesgue measures of $T$ and $W$, respectively, while $P(B_T\cap B_W)$ describes the probability that the corresponding time domain and frequency domain concentration events occur simultaneously. This result shows that the simultaneous concentration of a random signal and its FT is constrained not only by the measures of the time and frequency sets, but also by the probability of the associated random events.

Motivated by the above developments, this paper focuses on uncertainty principles for random signals in the linear canonical transform (LCT) \cite{LCT} . As a generalization of the FT and the fractional Fourier transform (FrFT), the LCT offers greater flexibility through its parameter selection and serves as a powerful tool for nonstationary signal analysis \cite{2026Suncertainty, xulctup, peng20252p}. Therefore, the study of random signals in the LCT domain is of considerable importance for analyzing signals involving both nonstationarity and random variations in practical signal processing \cite{mairandomlct}.

The remainder of this paper is organized as follows. Section \ref{sec2} introduces the basic definitions required for the subsequent analysis. Section \ref{sec3} establishes the HUP for random signals in the LCT domain. Section \ref{sec4} further investigates the DSUP for random signals in the LCT domain. Finally, Section \ref{sec5} concludes the paper.

\section{Preliminaries}
\label{sec2}
In this section, we review the mathematical representation of random signals and their Fourier transform (FT), and extend the fractional Fourier transform (FrFT) and the linear canonical transform (LCT) to random signals.

\begin{defn}
Let $(\Omega,\mathcal{F},P)$ be a probability space, where $\Omega$ represents the sample set, $\mathcal{F}$ is a $\sigma$-algebra and $P$ denotes the probability measure. A random signal is represented as \cite{dang2023}
\begin{equation}
f(t,\eta)=\psi(t,\eta)\mathrm{e}^{\mathrm{j}\theta(t,\eta)}, \ \ t \in \mathbb{R}, \ \ \eta \in \Omega,
\end{equation}
where $\psi(t,\eta)$ and $\theta(t,\eta)$ denote the amplitude and phase of the random signal, respectively, with $t$ being the time variable and $\eta$ representing the random parameter.
\end{defn}
Assume that $f(\cdot, \eta) \in L^2(\mathbb{R})$ for almost every $\eta \in \Omega$. Then for each fixed $\eta$, $f(\cdot,\eta)$ can be regarded as a deterministic signal on $\mathbb{R}$. Its energy is given by
\begin{equation}
\|f(\cdot,\eta)\|_{L^2(\mathbb{R})}^{2}
=
\int_{\mathbb{R}} |f(t,\eta)|^{2}\,dt .
\end{equation}
A random signal is said to be square-integrable if its mean energy is finite, 
\begin{equation}
\|f\|_{L^2(\mathbb{R}\times\Omega)}^{2}
=
\int_{\Omega}\int_{\mathbb{R}} |f(t,\eta)|^{2}\,dt\,dP
=
\mathbb{E}_{\eta}\left[
\|f(\cdot,\eta)\|_{L^2(\mathbb{R})}^{2}
\right]
<\infty .
\end{equation}
In what follows, we assume that the random signals under consideration belong to $L^2(\mathbb{R} \times \Omega)$ and are normalized such that
\begin{equation}
\|f\|_{L^2(\mathbb{R}\times\Omega)}^{2}=1 .
\end{equation}
\begin{defn}
	\label{FT}
The FT of the random signal $f(t,\eta)$ is defined by \cite{dang2023}
\begin{equation}
\hat{f}(\xi,\eta)
=
\frac{1}{\sqrt{2\pi}}
\int_{-\infty}^{+\infty}
f(t,\eta)\mathrm{e}^{-\mathrm{j}t\xi}\,dt.
\label{eq:random_signal_FT}
\end{equation}
\end{defn}

We extend the Definition~\ref{FT} to the FrFT and the LCT, leading to Definition~\ref{FrFTrm} and \ref{LCT}.
\begin{defn}
	\label{FrFTrm}
The FrFT of the random signal $f(t,\eta)$ is defined by
\begin{equation}
F_{\alpha}(\rho,\eta)
=
\frac{1}{\sqrt{\mathrm{j}2\pi\sin\alpha}}
\int_{-\infty}^{+\infty}
f(t,\eta)
\mathrm{e}^{
	\mathrm{j}
	\frac{(t^2+\rho^2)\cos\alpha-2t\rho}{2\sin\alpha}}
\,dt, \ \ \sin\alpha\neq 0.
\label{eq:random_signal_FrFT}
\end{equation}
where $\alpha$ is the rotation parameter.
\end{defn}

\begin{defn}
	\label{LCT}
The LCT of the random signal $f(t,\eta)$ is defined by
\begin{equation}
\label{eq:LCT_random}
L_A(u,\eta)
=
\mathcal{L}_A[f(t,\eta)](u)
=
\begin{cases}
\displaystyle
\int_{-\infty}^{+\infty}
f(t,\eta)K_A(t,u)\,dt, & b\neq 0, \\[2ex]
\displaystyle
\sqrt{d} \mathrm{e}^{\mathrm{j}\frac{cd u^2}{2}}
f(du,\eta), & b=0,
\end{cases}
\end{equation}
where the parameter matrix $A=\left[
\renewcommand{\arraystretch}{0.85}
\begin{array}{@{}cc@{}}
a & b \\
c & d
\end{array}
\right]$, 
with $a,b,c,d\in\mathbb{R}, ad-bc=1$, and the LCT kernel $K_A(t,u)$ is given by
\begin{equation}
\label{eq:LCT_kernel_random}
K_A(t,u)
=
\frac{1}{\sqrt{\mathrm{j}2\pi b}}\mathrm{e}^{\mathrm{j} \left( \frac{a}{2b}t^2 -\frac{1}{b}tu +\frac{d}{2b}u^2 \right)}.
\end{equation}
\end{defn}
It can be seen from the above definition, when $A=\left[
\renewcommand{\arraystretch}{0.85}
\begin{array}{@{}cc@{}}
\cos\alpha & \sin\alpha \\
-\sin\alpha & \cos\alpha
\end{array}
\right]$
the LCT reduces to the FrFT for random signals and
when
$A=\left[
\renewcommand{\arraystretch}{0.85}
\begin{array}{@{}cc@{}}
0 & 1 \\
-1 & 0
\end{array}
\right]$
the LCT reduces to the FT for random signals.

\section{HUP for random signals in the LCT domain}
\label{sec3}
In this section, we introduce the mean and expectation of random signals in the linear canonical transform (LCT) domain and establish the corresponding Heisenberg uncertainty principle (HUP).
\begin{defn}
The mean of LCT frequency is defined as
\begin{equation}
\mu_u
=
\int_{\Omega}\int_{-\infty}^{+\infty}
u |L_A(u,\eta)|^2\,du\,dP,
\label{mlct}
\end{equation}
and the duration is given by
\begin{equation}
\label{3.3}
\sigma_u^2(\eta)
=
\int_{-\infty}^{+\infty}
(u-\mu_u)^2 |L_A(u,\eta)|^2\,du.
\end{equation}
\end{defn}
\begin{lem}
	\label{lem1}
Suppose that for almost every $\eta\in\Omega$,
$f(t,\eta)$ , $tf(t,\eta)$, $\partial_t f(t,\eta) \in L^2(\mathbb{R})$, $\psi(\cdot, \eta) $, $\theta(\cdot, \eta)$ are continuously differentiable, and $\displaystyle \lim_{|t|\to\infty} f(t,\eta)=0$. Then, the following identity holds
\begin{equation}
\label{lemeq1}
\begin{aligned}
\sigma_u^2(\eta)
&=
\int_{-\infty}^{+\infty}
\left(
at+b\partial_t\theta(t,\eta)-\mu_u
\right)^2
\psi^2(t,\eta)\,dt  
+
b^2\int_{-\infty}^{+\infty}
\left|
\partial_t\psi(t,\eta)
\right|^2\,dt .
\end{aligned}
\end{equation}
where $a$ and $b$ are the LCT parameters associated with the matrix
$A$.
\end{lem}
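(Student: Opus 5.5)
Fix $\eta$ outside a null set; then $f(\cdot,\eta)$ is deterministic and the claim concerns only the LCT of a single function, with $\mu_u$ playing the role of a fixed real constant. The case $b=0$ needs separate treatment. There $|L_A(u,\eta)|^2=d\,|f(du,\eta)|^2$, and the substitution $t=du$ gives $\sigma_u^2(\eta)=\int (t/d-\mu_u)^2\psi^2\,dt$. Since $a=1/d$ when $b=0$, this matches \eqref{lemeq1}: the $b$-terms vanish. For $b\neq0$ the plan is to factor the kernel \eqref{eq:LCT_kernel_random} as a chirp, then a scaled FT, then a chirp:
\begin{equation*}
L_A(u,\eta)=\frac{1}{\sqrt{\mathrm{j}b}}\,\mathrm{e}^{\mathrm{j}\frac{d}{2b}u^2}\,\hat g\!\left(\frac{u}{b},\eta\right),\qquad g(t,\eta)=f(t,\eta)\mathrm{e}^{\mathrm{j}\frac{a}{2b}t^2}.
\end{equation*}
Consequently $|L_A(u,\eta)|^2=|b|^{-1}|\hat g(u/b,\eta)|^2$.

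Next substitute $u=b\xi$:
\begin{equation*}
\sigma_u^2(\eta)=\int(b\xi-\mu_u)^2|\hat g(\xi,\eta)|^2\,d\xi=\int\big|(b\xi-\mu_u)\hat g(\xi,\eta)\big|^2 d\xi .
\end{equation*}
By Plancherel, and using $\widehat{-\mathrm{j}\partial_t g}=\xi\hat g$, this equals $\int|{-\mathrm{j}b}\,\partial_t g-\mu_u g|^2\,dt$. The hypotheses $tf,\ \partial_t f\in L^2$ ensure $\partial_t g=(\partial_t f+\mathrm{j}\tfrac{a}{b}tf)\mathrm{e}^{\mathrm{j}\frac{a}{2b}t^2}\in L^2$, so the identity holds, for instance via the weak derivative, or via integration by parts justified by $f\to0$ at infinity. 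Writing $g=\psi\,\mathrm{e}^{\mathrm{j}(\theta+\frac{a}{2b}t^2)}$ gives
\begin{equation*}
-\mathrm{j}b\,\partial_t g-\mu_u g=\Big[-\mathrm{j}b\,\partial_t\psi+\big(b\,\partial_t\theta+at-\mu_u\big)\psi\Big]\mathrm{e}^{\mathrm{j}(\theta+\frac{a}{2b}t^2)}.
\end{equation*}
The bracket splits into an imaginary part $-b\,\partial_t\psi$ and a real part $(at+b\partial_t\theta-\mu_u)\psi$, because $\psi$, $\theta$, $a$, $b$ and $\mu_u$ are all real. Its squared modulus is therefore $b^2|\partial_t\psi|^2+(at+b\partial_t\theta-\mu_u)^2\psi^2$, and integrating in $t$ yields \eqref{lemeq1}. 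No cross term arises, since the two pieces are orthogonal pointwise.

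The main obstacle is not the algebra; it is justifying Plancherel with the derivative, i.e.\ that $\xi\hat g\in L^2$ and that it equals the transform of $-\mathrm{j}\partial_t g$. The hypotheses given (finite energy of $f$, $tf$ and $\partial_t f$, $C^1$ amplitude and phase, decay at infinity) are exactly what is needed for this. A secondary point is the branch of $\sqrt{\mathrm{j}b}$ when $b<0$; only its modulus $|b|^{1/2}$ enters, so the choice does not matter.
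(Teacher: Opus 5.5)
Your proof is correct, and at its core it is the paper's argument. Both show that $(u-\mu_u)L_A(u,\eta)$ is the unitary image of $(at-\mathrm{j}b\,\partial_t-\mu_u)f(t,\eta)$. Both then expand $|(at+b\partial_t\theta-\mu_u)\psi-\mathrm{j}b\,\partial_t\psi|^2$, where no cross term appears.

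The difference is the route.
\begin{itemize}
\item The paper works directly with the LCT kernel. It uses $uK_A(t,u)=atK_A(t,u)-\frac{b}{\mathrm{j}}\partial_tK_A(t,u)$, integrates by parts (the decay $f\to0$ kills the boundary term), and applies the LCT Parseval identity.
\item You factor the LCT as chirp, Fourier transform, chirp, and import the Fourier derivative rule and Plancherel for $g=f\,\mathrm{e}^{\mathrm{j}\frac{a}{2b}t^2}$.
\item The two operators coincide, since $-\mathrm{j}b\,\partial_tg-\mu_ug=\mathrm{e}^{\mathrm{j}\frac{a}{2b}t^2}\bigl(at+\frac{b}{\mathrm{j}}\partial_t-\mu_u\bigr)f$.
\end{itemize}

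Your route buys three things.
\begin{itemize}
\item A cleaner justification of the derivative step. The rule holds in the $L^2$ sense because $g$ is $C^1$ with $g,\partial_tg\in L^2$. The paper instead manipulates the pointwise integrals $\mathcal{L}_A[tf](u)$ and $\mathcal{L}_A[\partial_tf](u)$, which need not converge absolutely when $tf$ and $\partial_tf$ are only assumed to lie in $L^2$.
\item It makes explicit that only the reality of $\mu_u$ matters. The paper's explicit computation of $\mu_u$ is never used afterwards.
\item It covers the case $b=0$, which the paper's kernel-based proof silently excludes.
\end{itemize}

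There is one small slip in your $b=0$ case. Since $|\sqrt{d}|^2=|d|$, the correct identity is $|L_A(u,\eta)|^2=|d|\,|f(du,\eta)|^2$, not $d\,|f(du,\eta)|^2$. With this correction, the substitution $t=du$ gives $\int(at-\mu_u)^2\psi^2\,dt$ for either sign of $d$.
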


\begin{proof}
We first compute the  mean $\mu_u$ in the LCT domain,
	\begin{equation}
	\label{3.1}
	\begin{aligned}
	\mu_u
	&=
	\int_{\Omega}\int_{-\infty}^{+\infty}
	u|L_A(u,\eta)|^2\,du\,dP\\
	&=	\int_{\Omega}\int_{-\infty}^{+\infty}
	\overline{L_A(u,\eta)}	uL_A(u,\eta)\,du\,dP\\
	&=		\int_{\Omega} \int_{-\infty}^{+\infty} 	\overline{L_A(u,\eta)}
		\int_{-\infty}^{+\infty} f(t,\eta)uK_A(t,u)\,dt  \,du \,dP.\\
	\end{aligned}
	\end{equation}
	Using the differentiation property of the LCT, we have
	\begin{equation}
	\label{3.2d}
	uK_A(t,u)
	=
	atK_A(t,u)
	-
	\frac{b}{\mathrm{j}}
	\partial_t K_A(t,u).
	\end{equation}
	It follows from \eqref{3.1} and \eqref{3.2d} that
	\begin{equation}
	\begin{aligned}
	\mu_u
	&=		\int_{\Omega} \int_{-\infty}^{+\infty} 	\overline{L_A(u,\eta)}
\int_{-\infty}^{+\infty} f(t,\eta)uK_A(t,u)\,dt  \,du \,dP\\
	&=\int_{\Omega} \int_{-\infty}^{+\infty}	\overline {L_A(u,\eta)} 	\left( a\int_{-\infty}^{+\infty}
	t f(t,\eta)K_A(t,u)\,dt
	-
	\frac{b}{\mathrm{j}}
	\int_{-\infty}^{+\infty}
	f(t,\eta) \partial_t K_A(t,u)\,dt \right)\,du \,dP\\
	&=\int_{\Omega} \int_{-\infty}^{+\infty}	\overline{L_A(u,\eta)} 	\left(a \mathcal{L}_{A} [ t f(t,\eta) ] (u)-\frac{b}{\mathrm{j}} \int_{-\infty}^{+\infty}f(t,\eta) \,d(K_A(t,u))\,dt \right)\,du \,dP\\
	&=
	\int_{\Omega} \int_{-\infty}^{+\infty} \overline{L_A(u,\eta)} 
	\Biggl(
	a\,\mathcal{L}_{A}[t f(t,\eta)](u)
	-
	\frac{b}{\mathrm{j}}
	\Bigl(
	f(t,\eta) K_A(t,u)\big|_{-\infty}^{+\infty} \\
	&\qquad\qquad\qquad\qquad\qquad
	-
	\int_{-\infty}^{+\infty} K_A(t,u) \partial_t f(t,\eta)\,dt
	\Bigr)
	\Biggr)
	\,du\,dP.
	\end{aligned}
	\end{equation}
	By the assumption that $f(t,\eta)$ vanishes at infinity, we obtain
	\begin{equation}
	\begin{aligned}
		\mu_u
	&=\int_{\Omega} \int_{-\infty}^{+\infty}	\overline{L_A(u,\eta)} 	\left(a \mathcal{L}_{A} [t f(t,\eta)] (u)+\frac{b}{\mathrm{j}}  \int_{-\infty}^{+\infty}  K_A(t,u) \partial_t f(t,\eta)\,dt \right)\,du \,dP\\
	&=\int_{\Omega} \int_{-\infty}^{+\infty}	\overline{L_A(u,\eta)} 	\left(a \mathcal{L}_{A} [t f(t,\eta)] (u)+\frac{b}{\mathrm{j}} \mathcal{L}_{A} [\partial_t f(t,\eta)]  (u) \right)\,du \,dP.
	\end{aligned}
	\end{equation}
	Using the Parseval theorem in the LCT domain, we get
	\begin{equation}
	\label{mu_u}
	\begin{aligned}
	\mu_u
	&=
	a\int_{\Omega}\int_{-\infty}^{+\infty}
	\overline{f(t,\eta)}\,t f(t,\eta)\,dt\,dP
	+
	\frac{b}{\mathrm{j}}
	\int_{\Omega}\int_{-\infty}^{+\infty}
	\overline{f(t,\eta)}\,\partial_t f(t,\eta)\,dt\,dP \\
	&=
	a\int_{\Omega}\int_{-\infty}^{+\infty}
	t|f(t,\eta)|^2\,dt\,dP
	+
	\frac{b}{\mathrm{j}}
	\int_{\Omega}\int_{-\infty}^{+\infty}
	\psi(t,\eta)e^{-j\theta(t,\eta)}
	\partial_t\!\left(
	\psi(t,\eta)e^{\mathrm{j}\theta(t,\eta)}
	\right)
	dt\,dP \\
	&=
	a\int_{\Omega}\int_{-\infty}^{+\infty}
	t|f(t,\eta)|^2\,dt\,dP
	+
	\frac{b}{\mathrm{j}}
	\int_{\Omega}\int_{-\infty}^{+\infty}
	\left(
	\psi(t,\eta)\partial_t\psi(t,\eta)
	+
	\mathrm{j} \psi^2(t,\eta)\partial_t\theta(t,\eta)
	\right)
	dt\,dP.
	\end{aligned}
	\end{equation}
	We then compute $\sigma_u^2(\eta)$ in the LCT domain. By \eqref{3.3} and \eqref{mu_u} , we have
	\begin{equation}
	\begin{aligned}
	\sigma_u^2(\eta)
	&= \int_{-\infty}^{+\infty} 
	(u-\mu_u)^2 
	\left| L_A(u,\eta) \right|^2 \, du \\
	&= \int_{-\infty}^{+\infty} \left| u L_A(u,\eta)- \mu_u L_A(u,\eta) \right|^2 du \\
	&= \int_{-\infty}^{+\infty} 
	\left| 
	a\mathcal{L}_A[t f(t,\eta)](u) 
	+ \frac{b}{\mathrm{j}}
	\mathcal{L}_A\left[
	\partial_t f(t,\eta)
	\right](u)
	- \mu_u \mathcal{L}_A[f(t,\eta)](u)
	\right|^2 du \\
	&= \int_{-\infty}^{+\infty} 
	\left| 
	\mathcal{L}_A
	\left[
	\left(
	at+\frac{b}{\mathrm{j}}\partial t-\mu_u
	\right)
	f(t,\eta)
	\right](u)
	\right|^2 du .
	\end{aligned}
	\end{equation}
	Using the Parseval theorem, we get
	\begin{equation}
	\begin{aligned}
	\sigma_u^2(\eta) 
	&= \int_{-\infty}^{+\infty} \left| \left( at + \frac{b}{\mathrm{j}}\partial t - \mu_u \right)f(t,\eta) \right|^2 dt \\
	&= \int_{-\infty}^{+\infty} \left| at \psi(t,\eta)e^{\mathrm{j}\theta(t,\eta)} + \frac{b}{\mathrm{j}}\partial_t\left(\psi(t,\eta)e^{\mathrm{j}\theta(t,\eta)}\right) - \mu_u \psi(t,\eta)e^{\mathrm{j}\theta(t,\eta)} \right|^2 dt \\
	&= \int_{-\infty}^{+\infty} \left| a t \psi(t,\eta) e^{\mathrm{j}\theta(t,\eta)} + \frac{b}{\mathrm{j}}\left( \partial_t \psi(t,\eta) e^{\mathrm{j}\theta(t,\eta)} + \psi(t,\eta) e^{\mathrm{j}\theta(t,\eta)} \mathrm{j}\partial_t \theta(t,\eta) \right) - \mu_u \psi(t,\eta) e^{\mathrm{j}\theta(t,\eta)} \right|^2 dt \\
	&= \int_{-\infty}^{+\infty} \left| \left( a t + b \partial_t \theta(t,\eta)  - \mu_u \right) \psi(t,\eta) + \frac{b}{\mathrm{j}}  \partial_t \psi(t,\eta)  \right|^2 dt \\
	&=  \int_{-\infty}^{+\infty} \left( at + b \partial_t \theta(t,\eta) - \mu_u \right)^2 \psi^2(t,\eta) dt + b^2 \int_{-\infty}^{+\infty} \left| \partial_t \psi(t,\eta) \right|^2 dt.
	\end{aligned}
	\end{equation}
	This completes the proof.
\end{proof}
	Lemma~\ref{lem1} establishes the duration of random signals in the LCT domain. 
	By choosing the LCT parameter matrix corresponding to the fractional Fourier transform (FrFT), we obtain the following corollary.
\begin{corollary}
	\label{cor1}
	Suppose that for almost every $\eta\in\Omega$, $f(t,\eta)$, $tf(t,\eta)$, $\partial_t f(t,\eta) \in L^2(\mathbb{R})$, $\psi(\cdot,\eta)$ and $\theta(\cdot,\eta)$ are continuously differentiable, and $\displaystyle \lim_{|t|\to\infty} f(t,\eta)=0$.
	For the FrFT of rotation parameter $\alpha$ with $\sin\alpha\neq 0$, the following identity holds
		\begin{equation}
		\begin{aligned}
		\sigma_{\rho}^2(\eta)
		&=
		\int_{-\infty}^{+\infty}
		\left(
		t\cos\alpha
		+
		\sin\alpha\,\partial_t \theta(t,\eta)
		-
		\mu_{\rho}
		\right)^2
		\psi^2(t,\eta)\,dt  \\
		&\quad
		+
		\sin^2\alpha
		\int_{-\infty}^{+\infty}
		\left| \partial_t \psi(t,\eta)
		\right|^2\,dt .
		\end{aligned}
		\end{equation}
		where the mean of FrFT frequency is defined as
		\begin{equation}
		\mu_{\rho}
		=
		\int_{\Omega}\int_{-\infty}^{+\infty}
		\rho |F_{\alpha}(\rho,\eta)|^2\,d\rho\,dP.
		\end{equation}
\end{corollary}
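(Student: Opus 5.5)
The plan is to obtain Corollary~\ref{cor1} as a direct specialization of Lemma~\ref{lem1}, choosing the parameter matrix
\[
A=\left[\begin{array}{cc}\cos\alpha & \sin\alpha\\ -\sin\alpha & \cos\alpha\end{array}\right],
\]
so that $a=\cos\alpha$, $b=\sin\alpha$, $c=-\sin\alpha$, $d=\cos\alpha$. These satisfy $ad-bc=\cos^2\alpha+\sin^2\alpha=1$. Since $\sin\alpha\neq0$, we are in the case $b\neq0$ of Definition~\ref{LCT}.

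The first step is to check that the LCT kernel with these parameters coincides with the FrFT kernel of Definition~\ref{FrFTrm}. Substituting into $K_A(t,u)$ gives
\[
\frac{1}{\sqrt{\mathrm{j}2\pi\sin\alpha}}\,\mathrm{e}^{\mathrm{j}\frac{(t^2+u^2)\cos\alpha-2tu}{2\sin\alpha}},
\]
which is exactly the FrFT kernel with $u$ renamed $\rho$. Hence $L_A(\rho,\eta)=F_\alpha(\rho,\eta)$ for almost every $\eta$. Consequently the LCT mean $\mu_u$ in \eqref{mlct} equals $\mu_\rho$ as defined in the corollary, and the duration $\sigma_u^2(\eta)$ in \eqref{3.3} equals $\sigma_\rho^2(\eta):=\int(\rho-\mu_\rho)^2|F_\alpha(\rho,\eta)|^2\,d\rho$. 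Here I would state this definition of $\sigma_\rho^2$ explicitly, since the corollary uses it implicitly.

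The second step is to observe that the hypotheses of Corollary~\ref{cor1} are verbatim those of Lemma~\ref{lem1}, and that the lemma places no constraint on $A$ beyond $b\neq0$. Its proof uses only the differentiation identity \eqref{3.2d} and the LCT Parseval theorem, both valid for every such $A$. Applying \eqref{lemeq1} with $a=\cos\alpha$ and $b=\sin\alpha$ then yields the stated identity directly, with $b^2=\sin^2\alpha$ multiplying $\int|\partial_t\psi|^2\,dt$.

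There is no real obstacle. The only point needing care is the identification of the kernels, including the branch of the square root $\sqrt{\mathrm{j}2\pi b}$, which must be taken consistently in both definitions. I would also remark that the Parseval (unitarity) property used inside Lemma~\ref{lem1} holds for the FrFT as a special case of the LCT, so nothing further needs to be verified.
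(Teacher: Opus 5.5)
Your proposal is correct and follows the paper's own route: the paper obtains this corollary by substituting $a=d=\cos\alpha$, $b=-c=\sin\alpha$ into Lemma~\ref{lem1}, and your kernel check showing $L_A=F_\alpha$ (hence $\mu_u=\mu_\rho$ and $\sigma_u^2=\sigma_\rho^2$) is exactly the justification that substitution needs. Your explicit definition of $\sigma_\rho^2(\eta)$, which the paper uses without defining, is a worthwhile addition.
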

Moreover, Lemma~\ref{lem1} reduces to the corresponding result in the Fourier domain, which agrees with~\cite{dang2023}.

\begin{thm}
\label{thm1}
Let the assumptions of Lemma~\ref{lem1} hold. 
Then the HUP for random signals in the LCT domain is stated as follows
	\begin{equation}
	\begin{aligned}
	\mathbb{E}_{\eta}[\sigma_t^2(\eta)]
	\mathbb{E}_{\eta}[\sigma_u^2(\eta)]
	\geq
	\frac{b^2}{4}
	+
	\left[
	\int_{\Omega}\int_{-\infty}^{+\infty}
	\left|
	(t-\mu_t)
	\left(
	at+b\partial_t\theta(t,\eta)-\mu_u
	\right)
	\right|
	\psi^2(t,\eta)\,dt\,dP
	\right]^2 .
	\end{aligned}
	\end{equation}
	where
		\begin{equation}
	\mathbb{E}_{\eta}[\sigma_t^2(\eta)]
	=
	\int_{\Omega}\int_{-\infty}^{+\infty}
	(t-\mu_t)^2|f(t,\eta)|^2\,dt\,dP,
	\end{equation}
	and
	\begin{equation}
	\label{LCTE}
	\mathbb{E}_{\eta}[\sigma_u^2(\eta)]
	=
	\int_{\Omega} \int_{-\infty}^{+\infty} 
	(u-\mu_u)^2 
	\left| L_A(u,\eta) \right|^2 \, du \,dP.
	\end{equation}
\end{thm}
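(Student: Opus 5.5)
Integrating the identity of Lemma~\ref{lem1} over $\Omega$ splits $\mathbb{E}_{\eta}[\sigma_u^2(\eta)]$ into two nonnegative pieces. We then apply Cauchy--Schwarz on the product space $\mathbb{R}\times\Omega$ twice, once against each piece, and add the resulting bounds through an elementary inequality.

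Set $w(t,\eta)=at+b\partial_t\theta(t,\eta)-\mu_u$. Integrating \eqref{lemeq1} against $dP$ gives
\begin{equation*}
\mathbb{E}_{\eta}[\sigma_u^2(\eta)]=I_1+I_2,\qquad I_1=\int_{\Omega}\int_{-\infty}^{+\infty} w^2\psi^2\,dt\,dP,\quad I_2=b^2\int_{\Omega}\int_{-\infty}^{+\infty}|\partial_t\psi|^2\,dt\,dP .
\end{equation*}
Since $|f|=\psi$, we have $\mathbb{E}_{\eta}[\sigma_t^2(\eta)]=\int_\Omega\int (t-\mu_t)^2\psi^2\,dt\,dP=:S$.

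\textbf{Step 1 (amplitude term).} By Cauchy--Schwarz on $L^2(\mathbb{R}\times\Omega)$,
\begin{equation*}
S\,I_2\ge b^2\Bigl(\int_\Omega\int_{-\infty}^{+\infty}|(t-\mu_t)\psi\,\partial_t\psi|\,dt\,dP\Bigr)^2\ge \frac{b^2}{4}\Bigl(\int_\Omega\int_{-\infty}^{+\infty}(t-\mu_t)\partial_t(\psi^2)\,dt\,dP\Bigr)^2 .
\end{equation*}
For each fixed $\eta$, integrate by parts in $t$: $\int (t-\mu_t)\partial_t(\psi^2)\,dt=-\int\psi^2\,dt$. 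The boundary term $(t-\mu_t)\psi^2\big|_{-\infty}^{+\infty}$ must vanish; this follows, at least along a sequence $t\to\pm\infty$, from $tf\in L^2$ and $\partial_t f\in L^2$, since $t\psi^2$ then has a derivative in $L^1$ and is itself in $L^1$. Integrating over $\Omega$ and using the normalization $\int_\Omega\int\psi^2\,dt\,dP=1$ gives $S\,I_2\ge b^2/4$.

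\textbf{Step 2 (phase term).} Again by Cauchy--Schwarz,
\begin{equation*}
S\,I_1\ge\Bigl(\int_\Omega\int_{-\infty}^{+\infty}|(t-\mu_t)w|\,\psi^2\,dt\,dP\Bigr)^2=:C^2 .
\end{equation*}

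\textbf{Step 3 (combining).} Write $X=S$, so that $XI_1\ge C^2$ and $XI_2\ge b^2/4$. Summing gives
\begin{equation*}
\mathbb{E}_{\eta}[\sigma_t^2]\,\mathbb{E}_{\eta}[\sigma_u^2]=XI_1+XI_2\ge C^2+\frac{b^2}{4},
\end{equation*}
which is exactly the claimed inequality. The summation needs no cross term, because Lemma~\ref{lem1} already separated the two pieces additively.

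The main obstacle is the rigor of the integration by parts in Step 1. One must justify vanishing boundary terms for almost every $\eta$, and Fubini's theorem is needed to interchange the $t$- and $\eta$-integrals. If pointwise decay of $t\psi^2$ is problematic, I would integrate on $[-R,R]$ and pass to a subsequence $R_k\to\infty$ along which the boundary terms tend to zero, which is possible because $t\psi^2\in L^1$. I would then apply dominated convergence in $\eta$.
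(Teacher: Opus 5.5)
Your proposal is correct and follows the paper's proof essentially step for step. Like the paper, you integrate Lemma~\ref{lem1} over $\Omega$, obtain $S I_2\ge b^2/4$ from Cauchy--Schwarz plus integration by parts and $S I_1\ge C^2$ from Cauchy--Schwarz, and then add the two bounds. Your justification of the vanishing boundary term is more careful than the paper, which simply asserts it: $t\psi^2\in L^1$ with derivative in $L^1$ in fact gives the full limit $0$, not just a limit along a sequence.
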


\begin{proof}
	Using \eqref{lemeq1}, we first calculate \eqref{LCTE} and obtain 
	\begin{equation}
	\begin{aligned}
	\mathbb{E}_{\eta}[\sigma_u^2(\eta)]
	&=	\int_{\Omega}\int_{-\infty}^{+\infty}
	\left(
	at+b\partial_t\theta(t,\eta)-\mu_u
	\right)^2
	\psi^2(t,\eta)\,dt\,dP\\
	&\quad+b^2
	\int_{\Omega}\int_{-\infty}^{+\infty}
	|\partial_t\psi(t,\eta)|^2\,dt\,dP.
	\end{aligned}
	\end{equation}
    Next, we prove the following two inequalities separately,
    \begin{equation}
    \label{thm2.1}
	b^2 \int_{\Omega}\int_{-\infty}^{+\infty}
	|\partial_t\psi(t,\eta)|^2\,dt\,dP
	\int_{\Omega}\int_{-\infty}^{+\infty}
	(t-\mu_t)^2\psi^2(t,\eta)\,dt\,dP
	\geq \frac{b^2}{4},
    \end{equation}
	and
    \begin{equation}
    \label{thm2.2}
	\begin{aligned}
	&\int_{\Omega}\int_{-\infty}^{+\infty}
	\left(
	at+b\partial_t\theta(t,\eta)-\mu_u
	\right)^2
	\psi^2(t,\eta)\,dt\,dP
	\int_{\Omega}\int_{-\infty}^{+\infty}
	(t-\mu_t)^2\psi^2(t,\eta)\,dt\,dP \\
	&\geq
	\left[
	\int_{\Omega}\int_{-\infty}^{+\infty}
	\left|
	(t-\mu_t)
	\left(
	at+b\partial_t\theta(t,\eta)-\mu_u
	\right)
	\right|
	\psi^2(t,\eta)\,dt\,dP
	\right]^2 .
	\end{aligned}
	\end{equation}
	For \eqref{thm2.1}, by applying the Cauchy-Schwarz inequality, we obtain
	\begin{equation}
	\begin{aligned}
	&b^2 \int_{\Omega}\int_{-\infty}^{+\infty}
	|\partial_t\psi(t,\eta)|^2\,dt\,dP
	\int_{\Omega}\int_{-\infty}^{+\infty}
	(t-\mu_t)^2\psi^2(t,\eta)\,dt\,dP\\
	& \ge b^2 \left[
	\int_{\Omega}\int_{-\infty}^{+\infty}
	\left| \partial_t\psi(t,\eta) 
	(t-\mu_t) \psi(t,\eta)
	\right|\,dt\,dP
	\right]^2 \\
	& \ge b^2 \left[
	\int_{\Omega}\int_{-\infty}^{+\infty} \partial_t\psi(t,\eta) 
	(t-\mu_t) \psi(t,\eta) \,dt\,dP
	\right]^2 \\
	&=  \frac{b^2}{4} \left[
	\int_{\Omega} \left(
	(t-\mu_t) \psi^2(t,\eta) \big|^{+\infty}_{-\infty} -\int_{-\infty}^{+\infty} \psi^2(t,\eta) \,dt\right) \,dP
	\right]^2\\
	& =  \frac{b^2}{4} \left[
	\int_{\Omega} \int_{-\infty}^{+\infty} \psi^2(t,\eta) \,dt \,dP
	\right]^2\\
	&= \frac{b^2}{4}.
	\end{aligned}
	\end{equation}
	Then we prove \eqref{thm2.2}. Similarly, by the Cauchy-Schwarz inequality,
\begin{equation}
	\begin{aligned}
	&\left[
	\int_{\Omega}\int_{-\infty}^{+\infty}
	(t-\mu_t)^2\psi^2(t,\eta)\,dt\,dP
	\right]
	\left[
	\int_{\Omega}\int_{-\infty}^{+\infty}
	\left(
	at+b\partial_t\theta(t,\eta)-\mu_u
	\right)^2
	\psi^2(t,\eta)\,dt\,dP
	\right] \\
	&\ge \left[
	\int_{\Omega}\int_{-\infty}^{+\infty}
	\left|
	(t-\mu_t)\psi(t,\eta)
	\left(
	at+b\partial_t\theta(t,\eta)-\mu_u
	\right)
	\psi(t,\eta)	\right| \,dt\,dP
	\right]^2 \\
	&=
	\left[
	\int_{\Omega}\int_{-\infty}^{+\infty}
	\left|
	(t-\mu_t)
	\left(
	at+b\partial_t\theta(t,\eta)-\mu_u
	\right)
	\right|
	\psi^2(t,\eta)\,dt\,dP
	\right]^2 .
	\end{aligned}
\end{equation}
	Combining the above estimates, we get
	\[
	\begin{aligned}
	\mathbb{E}_{\eta}[\sigma_t^2(\eta)]
	\mathbb{E}_{\eta}[\sigma_u^2(\eta)]
	&\geq
	\frac{b^2}{4}
	+
	\left[
	\int_{\Omega}\int_{-\infty}^{+\infty}
	\left|
	(t-\mu_t)
	\left(
	at+b\partial_t\theta(t,\eta)-\mu_u
	\right)
	\right|
	\psi^2(t,\eta)\,dt\,dP
	\right]^2 .
	\end{aligned}
	\]
	This completes the proof.
\end{proof}
\begin{corollary}
	Let the assumptions of Theorem~\ref{thm1} hold. 
Then the HUP for random signals in the FrFT domain is stated as follows
\begin{equation}
\begin{aligned}
&\mathbb{E}_{\eta}\!\left[\sigma_t^2(\eta)\right]
\mathbb{E}_{\eta}\!\left[\sigma_{\rho}^2(\eta)\right] \\
 \geq&
\frac{\sin^2\alpha}{4}
+
\Biggl[
\int_{\Omega}\int_{-\infty}^{+\infty}
\left|
(t-\mu_t)
\left(
t\cos\alpha
+\sin\alpha\,\partial_t\theta(t,\eta)
-\mu_{\rho}
\right)
\right|  
\psi^2(t,\eta)\,dt\,dP
\Biggr]^2 .
\end{aligned}
\end{equation}
where
\begin{equation}
\label{FrFT}
\mathbb{E}_{\eta}[\sigma_{\alpha}^2(\eta)]
=
\int_{\Omega} \int_{-\infty}^{+\infty} 
(\rho-\mu_{\alpha})^2 
\left| F_{\alpha}(\rho,\eta) \right|^2 \, d\rho \,dP.
\end{equation}
\end{corollary}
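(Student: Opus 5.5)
This corollary is a direct specialization of Theorem~\ref{thm1}, so the plan is to substitute the FrFT parameter matrix rather than redo any analysis. We take $A=\left[\begin{array}{cc}\cos\alpha & \sin\alpha\\ -\sin\alpha & \cos\alpha\end{array}\right]$, which satisfies $ad-bc=\cos^2\alpha+\sin^2\alpha=1$. With $\sin\alpha\neq 0$ this is the case $b\neq 0$ of Definition~\ref{LCT}.

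The first step is to check that the LCT with this $A$ coincides with the FrFT of Definition~\ref{FrFTrm}. Inserting $a=d=\cos\alpha$ and $b=\sin\alpha$ into the kernel $K_A(t,u)$ gives
\[
\frac{1}{\sqrt{\mathrm{j}2\pi\sin\alpha}}\,\mathrm{e}^{\mathrm{j}\frac{(t^2+u^2)\cos\alpha-2tu}{2\sin\alpha}},
\]
which is exactly the FrFT kernel with $u=\rho$. Hence $L_A(\rho,\eta)=F_\alpha(\rho,\eta)$. It follows that the LCT mean $\mu_u$ becomes the FrFT mean $\mu_\rho$ of Corollary~\ref{cor1}, and that $\mathbb{E}_\eta[\sigma_u^2(\eta)]$ becomes $\mathbb{E}_\eta[\sigma_\rho^2(\eta)]$. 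The time-domain quantities $\mu_t$ and $\mathbb{E}_\eta[\sigma_t^2(\eta)]$ do not depend on $A$, so they are unchanged.

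The second step is to substitute the same values into the inequality of Theorem~\ref{thm1}. The hypotheses are those of Lemma~\ref{lem1}, which are assumed here. The term $b^2/4$ becomes $\sin^2\alpha/4$. The factor $at+b\,\partial_t\theta(t,\eta)-\mu_u$ becomes $t\cos\alpha+\sin\alpha\,\partial_t\theta(t,\eta)-\mu_\rho$, which gives the displayed inequality.

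Equivalently, one could rerun the proof of Theorem~\ref{thm1} with Corollary~\ref{cor1} in place of Lemma~\ref{lem1}; the two Cauchy--Schwarz steps are identical. There is no real obstacle. The only points needing care are notational: the kernel identification, including the branch of $\sqrt{\mathrm{j}\sin\alpha}$, which is the same in both definitions; and reading the quantity written as $\mathbb{E}_\eta[\sigma_\alpha^2(\eta)]$ in \eqref{FrFT}, with mean $\mu_\alpha$, as $\mathbb{E}_\eta[\sigma_\rho^2(\eta)]$ with mean $\mu_\rho$ from Corollary~\ref{cor1}.
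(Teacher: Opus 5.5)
Your proposal is correct and follows the paper's own route. The paper gives no separate proof: the corollary is Theorem~\ref{thm1} specialized to the FrFT parameter matrix with $a=d=\cos\alpha$ and $b=\sin\alpha$, so that $b^2/4$ becomes $\sin^2\alpha/4$ and $at+b\,\partial_t\theta-\mu_u$ becomes $t\cos\alpha+\sin\alpha\,\partial_t\theta-\mu_\rho$; your kernel check and your reading of $\sigma_\alpha^2,\mu_\alpha$ as $\sigma_\rho^2,\mu_\rho$ are exactly the details that need confirming.
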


Moreover, the Fourier domain result in  \eqref{HUPFT} is obtained as a special case of Theorem~\ref{thm1}.

\section{DSUP for random signals in the LCT domain}
\label{sec4}
We begin by establishing several lemmas and operator-theoretic results. These preliminary results provide the foundation for deriving the Donoho-Stark uncertainty principle (DSUP) for random signals in the linear canonical transform (LCT) domain. We then use this principle to investigate the recovery of random signals from incomplete or partially missing observations.

A random signal $f(t,\eta)$ is said to be time-limited to a measurable set $B_T\times T$ if
\begin{equation}
f(t,\eta)=0,
\qquad \text{for a.e. }(\eta,t)\notin B_T\times T .
\end{equation}
Similarly, $f(t,\eta)$ is said to be frequency-limited to a measurable set $B_U\times U$ in the LCT domain if
\begin{equation}
L_A(u,\eta)=0,
\qquad \text{for a.e. }(\eta,u)\notin B_U\times U .
\end{equation}

Then the time-limiting operator is defined by
\begin{equation}
\label{tl}
\tau_{B_T \times T} f(t,\eta)
=
\chi_{B_T}(\eta)\chi_T(t)f(t,\eta),
\end{equation}
where $\chi_{B_T}$ and $\chi_T$ denote the characteristic functions of $B_T$ and $T$, respectively. And the frequency-domain limiting operator in the LCT domain is defined by
\begin{equation}
\label{fl}
\tau_{B_U \times U} f(t,\eta)
=
\chi_{B_U}(\eta)
\frac{1}{\sqrt{-\mathrm{j}2\pi b}}
\int_U L_A(u, \eta)
\mathrm{e}^{-\mathrm{j}\frac{a t^2+d u^2-2tu}{2b}}
\,du ,
\end{equation}
where $\chi_{B_U}$ denote the characteristic functions of $B_U$. Here, $\chi_{B_T}$ and $\chi_{B_U}$ are determined only by the measurable sets and are independent of the signal $f(t,\eta)$.

We shall also use the standard operator norm on $L^2(\mathbb{R}\times\Omega)$. 
For a bounded linear operator
$Q:L^2(\mathbb{R}\times\Omega)\to L^2(\mathbb{R}\times\Omega)$, its norm is defined by
\begin{equation}
\label{qf}
\|Q\|
=
\sup_{f\neq 0}
\frac{\|Qf\|_{L^2(\mathbb{R}\times\Omega)}}
{\|f\|_{L^2(\mathbb{R}\times\Omega)}} .
\end{equation}
In addition, if $Q$ acting on $f(t,\eta)$ can be expressed as
\begin{equation}
Qf(t,\eta)
=
\int_{\mathbb{R}} q(\eta,t,x)f(x,\eta)\,dx,
\end{equation}
where the kernel $q(\eta,t,x)$ is chosen such that
$P\left(
	\int_{\mathbb{R}}
	|f(x,\eta) q(\eta,t,x)|\,dx
	<\infty
	\right)=1, \ x\in\mathbb{R}$,
and
$\int_{\Omega}\int_{\mathbb{R}}
	|Qf(t,\eta)|^2\,dt\,dP
	<\infty$. Then its Hilbert-Schmidt norm is given by
\begin{equation}
\label{qhs}
\|Q\|_{\mathrm{HS}}
=
\left(
\int_{\Omega}
\int_{\mathbb{R}}
\int_{\mathbb{R}}
|q(\eta,t,x)|^2\,dx\,dt\,dP
\right)^{\frac{1}{2}}.
\end{equation}

 Next, we review the relationship between the operator norm of $Q$ and its Hilbert-Schmidt norm.
\begin{lem}
	\label{lem}
	If $q(\eta,t,x)$ in the operator $Q$ is independent of $f(t,\eta)$, then \cite{dang2023}
	\begin{equation}
	\|Q\|\leq \|Q\|_{HS}.
	\end{equation}
\end{lem}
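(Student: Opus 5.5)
The plan is to prove the bound pointwise in $\eta$ with the Cauchy--Schwarz inequality in the $x$ variable and then integrate over $t$ and $\eta$. Fix $f\in L^2(\mathbb{R}\times\Omega)$. By the hypothesis on the kernel, for $P$-almost every $\eta$ and every $t$ the integral $\int_{\mathbb{R}} q(\eta,t,x)f(x,\eta)\,dx$ converges absolutely, so $Qf(t,\eta)$ is well defined. For such $(\eta,t)$, Cauchy--Schwarz in $x$ gives
\begin{equation*}
|Qf(t,\eta)|^2
\le
\left(\int_{\mathbb{R}} |q(\eta,t,x)|\,|f(x,\eta)|\,dx\right)^2
\le
\int_{\mathbb{R}} |q(\eta,t,x)|^2\,dx
\int_{\mathbb{R}} |f(x,\eta)|^2\,dx .
\end{equation*}

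Next I integrate this in $t$ and then against $dP$. The factor $\int_{\mathbb{R}}|f(x,\eta)|^2dx=\|f(\cdot,\eta)\|^2_{L^2(\mathbb{R})}$ does not depend on $t$, which yields
\begin{equation*}
\|Qf\|^2_{L^2(\mathbb{R}\times\Omega)}
\le
\int_{\Omega} \|f(\cdot,\eta)\|^2_{L^2(\mathbb{R})}
\left(\int_{\mathbb{R}}\int_{\mathbb{R}}|q(\eta,t,x)|^2\,dx\,dt\right) dP .
\end{equation*}
Tonelli's theorem justifies exchanging the integrals, since every integrand here is nonnegative and measurable.

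The remaining step, which I expect to be the main obstacle, is to separate the $\eta$-dependence of the two factors. If I apply Cauchy--Schwarz only in $x$ while keeping $\eta$ fixed, I get a product of functions of $\eta$ integrated against $dP$. This is not bounded by the product $\|Q\|_{\mathrm{HS}}^2\|f\|^2$ in general: Hölder only gives $\|Q\|_{\mathrm{HS}}^2\cdot\operatorname{ess\,sup}_\eta\|f(\cdot,\eta)\|^2$, or the reverse pairing.

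To close the gap, I would apply Cauchy--Schwarz on the product space $\mathbb{R}\times\Omega$ in the variables $(x,\eta)$ jointly, which matches how $\|Q\|_{\mathrm{HS}}$ is defined in \eqref{qhs}. Here I read the assumption that $q$ is ``independent of $f$'' as meaning that $Q$ acts as an integral operator on $L^2(\mathbb{R}\times\Omega)$ whose kernel integrates over both variables; the random parameter enters the kernel, and $f$ is integrated against it over $\mathbb{R}\times\Omega$. The inner bound then becomes
\begin{equation*}
|Qf(t,\eta)|^2\le \int_{\Omega}\int_{\mathbb{R}}|q(\eta,t,x)|^2\,dx\,dP\;\cdot\;\|f\|^2_{L^2(\mathbb{R}\times\Omega)}.
\end{equation*}
Integrating in $t$ and $\eta$ gives $\|Qf\|\le\|Q\|_{\mathrm{HS}}\|f\|$ up to the normalization $P(\Omega)=1$, since $(\Omega,\mathcal{F},P)$ is a probability space.

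Taking the supremum over $f\neq0$ in \eqref{qf} then gives $\|Q\|\le\|Q\|_{\mathrm{HS}}$. As the lemma is quoted from \cite{dang2023}, I would follow their convention at this step.
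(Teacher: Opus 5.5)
The paper gives no proof of this lemma (it is only quoted from \cite{dang2023}), so your proposal has to stand on its own, and it does not. Your first two steps are fine, and your diagnosis in the third step is exactly right. The fourth step, however, closes the gap by changing the operator. The $Q$ defined in the paper acts fiber by fiber, $Qf(t,\eta)=\int_{\mathbb{R}}q(\eta,t,x)f(x,\eta)\,dx$, with the same $\eta$ in the kernel and in the signal. This is also the structure of $\tau_{B_U\times U}\tau_{B_T\times T}$ in Lemma~\ref{lem3}, which is where the lemma is used. A Cauchy--Schwarz inequality ``jointly in $(x,\eta)$'' would need an integration over a second copy $\eta'$ of the random parameter, i.e.\ the operator $f\mapsto\int_{\mathbb{R}}q(\eta,t,x)\int_{\Omega}f(x,\eta')\,dP(\eta')\,dx$. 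That is a different operator, for which your bound is indeed correct. For the actual $Q$, your displayed pointwise bound is false: if $f$ puts all its mass on an event of small probability, then $|Qf(t,\eta)|$ becomes large on that event while $\|f\|_{L^2(\mathbb{R}\times\Omega)}$ stays equal to $1$.

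The obstacle you found cannot be removed, because with the averaged norm \eqref{qhs} and the supremum over all $f\neq0$ in \eqref{qf}, the inequality is false. Take $g\in L^2(\mathbb{R})$ with $\|g\|_{L^2(\mathbb{R})}=1$, an event $B$ with $0<P(B)=p<1$, and $q(\eta,t,x)=\chi_B(\eta)g(t)\overline{g(x)}$, which certainly does not depend on $f$. Then $\|Q\|_{\mathrm{HS}}^2=p$, while $f(x,\eta)=p^{-1/2}\chi_B(\eta)g(x)$ satisfies $\|f\|_{L^2(\mathbb{R}\times\Omega)}=1$ and $Qf=f$. Hence $\|Q\|\geq1>\sqrt{p}=\|Q\|_{\mathrm{HS}}$. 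What is true for a fiberwise operator is $\|Q\|\leq\operatorname{ess\,sup}_{\eta}\|q(\eta,\cdot,\cdot)\|_{L^2(\mathbb{R}^2)}$, which is precisely your H\"older estimate.

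The only reading of ``independent of $f$'' under which your second step finishes is a probabilistic one. Suppose the random variables $\eta\mapsto\int_{\mathbb{R}}\int_{\mathbb{R}}|q(\eta,t,x)|^2\,dx\,dt$ and $\eta\mapsto\|f(\cdot,\eta)\|_{L^2(\mathbb{R})}^2$ are independent (uncorrelated suffices). Then the $dP$-integral of their product factors, giving $\|Qf\|\leq\|Q\|_{\mathrm{HS}}\|f\|$ for that particular $f$. But this is an estimate on a restricted class of signals, not a bound on the supremum defining $\|Q\|$. The same mechanism also shows that \eqref{th3} cannot hold with the factor $P(B_T\cap B_U)$: fix $T$ and $U$, take $B_T=B_U=B$, and let $P(B)\to0$. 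The fiberwise norm of $\tau_{B_U\times U}\tau_{B_T\times T}$ does not depend on $P(B)$.
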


The following lemma extends the Hilbert-Schmidt norm result of the limiting operator in Dang et al.~\cite{dang2023} to the LCT domain.

\begin{lem}
	\label{lem3}
	The Hilbert-Schmidt norm of $\tau_{B_U\times U}\tau_{B_T\times T}$ satisfies
	\begin{equation}
	\left\|
	\tau_{B_U\times U}\tau_{B_T\times T}
	\right\|_{HS}^2
	=
\frac{1}{2\pi |b|}
	|U||T|P(B_T\cap B_U), \ \ b \neq 0.
	\end{equation}
\end{lem}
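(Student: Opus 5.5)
The plan is to write the composite operator $\tau_{B_U\times U}\tau_{B_T\times T}$ explicitly as an integral operator of the form $Qf(t,\eta)=\int_{\mathbb{R}} q(\eta,t,x)f(x,\eta)\,dx$. Then we read off its kernel and evaluate the Hilbert-Schmidt norm \eqref{qhs} directly.

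First, apply \eqref{tl}: $g(x,\eta)=\tau_{B_T\times T}f(x,\eta)=\chi_{B_T}(\eta)\chi_T(x)f(x,\eta)$. Its LCT is
\[
\mathcal{L}_A[g](u)=\chi_{B_T}(\eta)\int_{\mathbb{R}}\chi_T(x)f(x,\eta)\frac{1}{\sqrt{\mathrm{j}2\pi b}}\mathrm{e}^{\mathrm{j}\frac{ax^2-2xu+du^2}{2b}}\,dx .
\]
Inserting this into \eqref{fl} and exchanging the order of the $u$- and $x$-integrations gives the kernel
\[
q(\eta,t,x)=\chi_{B_U}(\eta)\chi_{B_T}(\eta)\chi_T(x)\frac{1}{2\pi|b|}\int_U \mathrm{e}^{\mathrm{j}\frac{ax^2-2xu+du^2}{2b}}\mathrm{e}^{-\mathrm{j}\frac{at^2+du^2-2tu}{2b}}\,du .
\]
Here we use $\frac{1}{\sqrt{\mathrm{j}2\pi b}}\cdot\frac{1}{\sqrt{-\mathrm{j}2\pi b}}$, which has modulus $\frac{1}{2\pi|b|}$. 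The chirp terms in $u^2$ cancel, so
\[
q(\eta,t,x)=\chi_{B_T\cap B_U}(\eta)\chi_T(x)\frac{c_b}{2\pi|b|}\mathrm{e}^{\mathrm{j}\frac{a(x^2-t^2)}{2b}}\int_U \mathrm{e}^{\mathrm{j}\frac{(t-x)u}{b}}\,du,
\]
with $|c_b|=1$. The kernel clearly does not depend on $f$, as Lemma~\ref{lem} requires.

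Next, compute $\|Q\|_{HS}^2=\int_\Omega\int_{\mathbb{R}}\int_{\mathbb{R}}|q|^2\,dx\,dt\,dP$. The $\eta$-factor integrates to $P(B_T\cap B_U)$, and the unimodular chirp drops out. We are left with
\[
\frac{1}{4\pi^2b^2}\int_T\int_{\mathbb{R}}\Bigl|\int_U \mathrm{e}^{\mathrm{j}(t-x)u/b}\,du\Bigr|^2\,dt\,dx .
\]
For fixed $x$, substitute $s=(t-x)/b$, so $dt=|b|\,ds$. The inner function is then $\int_U\mathrm{e}^{\mathrm{j}su}\,du=\sqrt{2\pi}\,\widehat{\chi_U}(-s)$. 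Plancherel gives $\int_{\mathbb{R}}|\cdot|^2\,ds=2\pi|U|$, so the $t$-integral equals $2\pi|b||U|$. Integrating over $x\in T$ contributes $|T|$. In total,
\[
\frac{2\pi|b||U||T|P(B_T\cap B_U)}{4\pi^2b^2}=\frac{1}{2\pi|b|}|U||T|P(B_T\cap B_U),
\]
which is the claimed identity.

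The main obstacle is the interchange of integrals needed to produce the kernel, that is, Fubini justification when $U$ or $T$ may have infinite measure. I would first assume $|U|,|T|<\infty$; otherwise the right-hand side is infinite and the identity is trivial or interpreted as $\infty$. Under that assumption, $\chi_T f(\cdot,\eta)\in L^1\cap L^2$ for a.e.\ $\eta$ by Cauchy--Schwarz, and $\chi_U$ is integrable, so the double integral converges absolutely and Fubini applies. The remaining steps, the Plancherel computation and the cancellation of the chirps, are routine bookkeeping of the constants $2\pi$ and $|b|$.
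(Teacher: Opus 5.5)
Your proposal is correct and follows essentially the same route as the paper: you write $\tau_{B_U\times U}\tau_{B_T\times T}$ as an integral operator with the same kernel $q(\eta,t,x)=\chi_T(x)h(\eta,t,x)$, then evaluate the $t$-integral of $|q|^2$ with a Plancherel identity before integrating over $x\in T$ and $\eta\in\Omega$. The one difference is in that $t$-integral. You substitute $s=(t-x)/b$ and apply ordinary Plancherel to $\chi_U$, with an explicit Fubini justification when $|U|,|T|<\infty$. The paper instead applies LCT Parseval in $t$ together with the formal delta-function identity $\int_{\mathbb{R}}\mathrm{e}^{\mathrm{j}(u-v)t/b}\,dt=2\pi|b|\,\delta(u-v)$, so your version of that step is the more rigorous one.
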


\begin{proof}
	For any $f(t,\eta)\in L^2(\mathbb{R}\times\Omega)$, by \eqref{tl} and \eqref{fl}, we have
	\begin{equation}
	\begin{aligned}
	&\tau_{B_U \times U}\tau_{B_T \times T}f(t,\eta) \\
	=&
	\chi_{B_U}(\eta)
	\frac{1}{\sqrt{-\mathrm{j}2\pi b}}
	\int_U
	\mathcal{L}_A[\tau_{B_T \times T}f(t,\eta)](u)
	\mathrm{e}^{-\mathrm{j}\left(
		\frac{d}{2b}u^2-\frac{u}{b}t+\frac{a}{2b}t^2
		\right)}
	\,du \\[6pt]
	=&
	\chi_{B_U}(\eta)
	\frac{1}{\sqrt{-\mathrm{j}2\pi b}}
	\int_U
	\chi_{B_T}(\eta)
	\int_T
	f(x,\eta)
	\frac{1}{\sqrt{\mathrm{j}2\pi b}}
	\mathrm{e}^{\mathrm{j}\left(
		\frac{a}{2b}x^2-\frac{u}{b}x+\frac{d}{2b}u^2
		\right)}
	\mathrm{e}^{-\mathrm{j}\left(
		\frac{d}{2b}u^2-\frac{u}{b}t+\frac{a}{2b}t^2
		\right)}
	\,dx 	\,du \\
	=&
	\frac{1}{2\pi |b|}
	\chi_{B_T}(\eta)\chi_{B_U}(\eta)
	\int_U\int_T
	f(x,\eta)
	\mathrm{e}^{\mathrm{j}\left(
		\frac{a}{2b}(x^2-t^2)-\frac{u}{b}(x-t)
		\right)}
	\,dx\,du.
	\end{aligned}
	\end{equation}
 For convenience, we define
\begin{equation}
	h(\eta,t,x)
	=
	\frac{1}{2\pi |b|}
	\chi_{B_T}(\eta)\chi_{B_U}(\eta)
	\int_U
	\mathrm{e}^{\mathrm{j}\left(
		\frac{a}{2b}(x^2-t^2)-\frac{u}{b}(x-t)
		\right)}
	\,du.
\end{equation}
	It follows that
	\begin{equation}
\begin{aligned}
\tau_{B_U \times U}\tau_{B_T \times T}f(t,\eta)
&=
\int_T f(x,\eta)h(\eta,t,x)\,dx \\
&=
\int_{\mathbb{R}}\chi_T(x)f(x,\eta)h(\eta,t,x)\,dx.
\end{aligned}
\end{equation}
Then let
\begin{equation}
q(\eta,t,x)=\chi_T(x)h(\eta,t,x),
\end{equation}
so that
\begin{equation}
\tau_{B_U \times U}\tau_{B_T \times T}f(t,\eta)
=
\int_{\mathbb{R}}f(x,\eta)q(\eta,t,x)\,dx.
\end{equation}
Therefore, by \eqref{qhs}, we get
\begin{equation}
\begin{aligned}
\left\|
\tau_{B_U \times U}\tau_{B_T \times T}
\right\|_{HS}^2
&=
\int_{\Omega}\int_{\mathbb{R}}\int_{\mathbb{R}}
|q(\eta,t,x)|^2\,dx\,dt\,dP \\
&=
\int_{\Omega}\int_{\mathbb{R}}\int_T
|h(\eta,t,x)|^2\,dx\,dt\,dP.
\end{aligned}
\end{equation}

Moreover, by the Parseval theorem in the LCT domain, we obtain
\begin{equation}
\begin{aligned}
&\int_{\mathbb{R}}|h(\eta,t,x)|^2\,dt
=
\int_{\mathbb{R}}
\left|
\mathcal{L}[h(\eta,t,x)](v)
\right|^2
\,dv \\[6pt]
=& \int_{\mathbb{R}} \left| \int_{\mathbb{R}} \left(\frac{1}{2\pi |b|} \chi_{B_T}(\eta)\chi_{B_U}(\eta)
\int_U \mathrm{e}^{\mathrm{j}\left( \frac{a}{2b}(x^2-t^2) - \frac{u}{b}(x-t) \right)} du \,\right)
\frac{1}{\sqrt{\mathrm{j}2\pi b}} \mathrm{e}^{\mathrm{j}\left( \frac{a}{2b}t^2 - \frac{v}{b}t + \frac{d}{2b}v^2 \right)} dt \right|^2 dv \\[6pt]
= &\int_{\mathbb{R}} \left|  \frac{1}{2\pi |b|} \frac{1}{\sqrt{\mathrm{j}2\pi b}}  \chi_{B_T}(\eta)\chi_{B_U}(\eta)
\mathrm{e}^{\mathrm{j} \left(\frac{a}{2b}x^2+ \frac{d}{2b} v^2 \right) } \int_{\mathbb{R}} \int_U \mathrm{e}^{\mathrm{-j} \frac{u}{b}x }
\mathrm{e}^{\mathrm{j} \frac{u-v}{b}t } du \, dt \right|^2 dv \\[6pt]
=& \int_{\mathbb{R}} \left|  \frac{1}{2\pi |b|} \frac{1}{\sqrt{\mathrm{j}2\pi b}}  \chi_{B_T}(\eta)\chi_{B_U}(\eta)
\mathrm{e}^{\mathrm{j} \left(\frac{a}{2b}x^2+ \frac{d}{2b} v^2 \right) }  \int_U \mathrm{e}^{\mathrm{-j} \frac{u}{b}x } 2\pi |b|\,\delta(u-v)
du \,  \right|^2 dv \\[6pt]
=& \int_U \left| \frac{1}{\sqrt{\mathrm{j}2\pi b}} \chi_{B_T}(\eta)\chi_{B_U}(\eta)
\mathrm{e}^{\mathrm{j}\left( \frac{a}{2b}x^2 - \frac{u}{b}x + \frac{d}{2b}u^2 \right)} \right|^2 dv \\[6pt]
=& \frac{1}{2\pi |b|} \int_U \chi_{B_T\cap B_U}(\eta) dv \\
=&
\frac{1}{2\pi |b|}
|U|\chi_{B_T\cap B_U}(\eta).
\end{aligned}
\end{equation}

Consequently, we have
\begin{equation}
\begin{aligned}
\left\|
\tau_{B_U \times U}\tau_{B_T \times T}
\right\|_{HS}^2
&=
\int_{\Omega}\int_T
\frac{1}{2\pi |b|}
|U|\chi_{B_T\cap B_U}(\eta)
\,dx\,dP \\
&=
\frac{1}{2\pi |b|}
|U||T|P(B_T\cap B_U).
\end{aligned}
\end{equation}
	This completes the proof.
\end{proof}

By Lemma~\ref{lem}, it follows that
\begin{equation}
\left\|
\tau_{B_U \times U}\tau_{B_T \times T}
\right\|^2
\leq
\left\|
\tau_{B_U \times U}\tau_{B_T \times T}
\right\|_{HS}^2.
\end{equation}
Furthermore, applying Lemma~\ref{lem3}, we obtain
\begin{equation}
\label{th3}
\left\|
\tau_{B_U \times U}\tau_{B_T \times T}
\right\|^2
\leq
\frac{1}{2\pi |b|}
|U||T|P(B_T\cap B_U), \ \ b \neq 0.
\end{equation}

\begin{defn}
	The random signal $f(t,\eta)$ is said to be $\varepsilon_{B_T\times T}$-concentrated on $B_T\times T$ if there exists a function $f_1(t,\eta)$ supported on $B_T\times T$ such that
\begin{equation}
\label{df1}
	\|f(t,\eta)-f_1(t,\eta)\|_{\Omega}
	\leq
	\varepsilon_{B_T\times T}.
\end{equation}
	Similarly, $L_A(u,\eta)$ is said to be $\varepsilon_{B_U\times U}$-concentrated on $B_U\times U$ if there exists a function $f_2(u,\eta)$ supported on $B_U\times U$ such that
\begin{equation}
\label{df2}
	\left\|
	L_A(u,\eta)-f_2(u,\eta)
	\right\|_{\Omega}
	\leq
	\varepsilon_{B_U\times U}.
\end{equation}
\end{defn}
With the above preparation, we establish the DSUP in the LCT domain.
\begin{thm}
	\label{thm4}
	Assume that $\|f\|_{\Omega}=1$. If $f(t,\eta)$ is
	$\varepsilon_{B_T\times T}$-concentrated on $B_T\times T$ and
	$L_A(u,\eta)$ is
	$\varepsilon_{B_U\times U}$-concentrated on $B_U\times U$, then the DSUP is as follows
	\begin{equation}
\frac{1}{2\pi |b|}
	|U||T|P(B_T\cap B_U)
	\geq
	\left(
	1-\varepsilon_{B_T\times T}
	-\varepsilon_{B_U\times U}
	\right)^2, \ \ b \neq 0.
	\end{equation}
\end{thm}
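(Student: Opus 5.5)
The plan is to follow the classical Donoho--Stark argument, using the operator-norm bound \eqref{th3} as the only quantitative input. The target is the lower bound
\[
\left\|\tau_{B_U\times U}\tau_{B_T\times T}\right\| \ge 1-\varepsilon_{B_T\times T}-\varepsilon_{B_U\times U}.
\]
Squaring it and combining with \eqref{th3} then gives the claim directly, provided the right-hand side is nonnegative. If $1-\varepsilon_{B_T\times T}-\varepsilon_{B_U\times U}<0$, there is nothing to prove since squaring is not monotone; I will restrict to the case $\varepsilon_{B_T\times T}+\varepsilon_{B_U\times U}\le 1$, as in the classical statement.

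First I would turn the two concentration hypotheses into statements about the limiting operators.

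\textbf{Time side.} Since $f_1$ is supported on $B_T\times T$ and $\tau_{B_T\times T}$ is multiplication by a characteristic function, $\tau_{B_T\times T}$ is the orthogonal projection onto functions supported there. Hence
\[
\|f-\tau_{B_T\times T}f\|_{\Omega}\le \|f-f_1\|_{\Omega}\le\varepsilon_{B_T\times T}.
\]

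\textbf{Frequency side.} I would observe that $\tau_{B_U\times U}f=\mathcal{L}_A^{-1}[\chi_{B_U}\chi_U L_A]$, because the kernel in \eqref{fl} is exactly the inverse LCT kernel $\overline{K_A(t,u)}$. By the Parseval theorem for the LCT (unitarity, applied pointwise in $\eta$ and integrated over $\Omega$), $\tau_{B_U\times U}$ is also an orthogonal projection. Moreover,
\[
\|f-\tau_{B_U\times U}f\|_{\Omega}=\|L_A-\chi_{B_U}\chi_U L_A\|_{\Omega}\le\|L_A-f_2\|_{\Omega}\le\varepsilon_{B_U\times U}.
\]
In particular $\|\tau_{B_U\times U}\|\le 1$.

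Next comes the triangle-inequality chain:
\[
\|f-\tau_{B_U\times U}\tau_{B_T\times T}f\|_{\Omega}\le \|f-\tau_{B_U\times U}f\|_{\Omega}+\|\tau_{B_U\times U}(f-\tau_{B_T\times T}f)\|_{\Omega}\le \varepsilon_{B_U\times U}+\varepsilon_{B_T\times T}.
\]
Using $\|f\|_{\Omega}=1$, this yields
\[
\|\tau_{B_U\times U}\tau_{B_T\times T}f\|_{\Omega}\ge 1-\varepsilon_{B_T\times T}-\varepsilon_{B_U\times U}.
\]
Therefore $\|\tau_{B_U\times U}\tau_{B_T\times T}\|\ge 1-\varepsilon_{B_T\times T}-\varepsilon_{B_U\times U}$ by the definition \eqref{qf}. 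Squaring and invoking \eqref{th3}, which was obtained from Lemma~\ref{lem} and Lemma~\ref{lem3}, finishes the proof.

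The main obstacle I expect is the frequency-side step: justifying that $\tau_{B_U\times U}$ is a contraction (indeed a projection) and that the best approximation by functions supported on $B_U\times U$ in the LCT domain is attained by truncation. This relies on the LCT being unitary on $L^2(\mathbb{R})$ for a.e. fixed $\eta$, together with measurability in $\eta$ so that Fubini lets us integrate the Parseval identity over $\Omega$. I would handle it by writing $\|\cdot\|_{\Omega}^2=\int_\Omega\|\cdot(\cdot,\eta)\|_{L^2(\mathbb{R})}^2\,dP$ and applying the deterministic Parseval identity inside the $\eta$-integral.
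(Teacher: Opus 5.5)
Your proof is correct and follows the paper's route: triangle inequality, then reverse triangle inequality, then a lower bound on $\|\tau_{B_U\times U}\tau_{B_T\times T}\|$, then \eqref{th3}. Your splitting $\|f-\tau_{B_U\times U}f\|_{\Omega}+\|\tau_{B_U\times U}(f-\tau_{B_T\times T}f)\|_{\Omega}$, together with your argument that $\tau_{B_U\times U}$ is an orthogonal projection, is actually the justified version: the paper's bound $\|\tau_{B_T\times T}f-\tau_{B_U\times U}\tau_{B_T\times T}f\|_{\Omega}\le\varepsilon_{B_U\times U}$ does not follow directly from the concentration hypothesis on the LCT of $f$.

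One correction: when $\varepsilon_{B_T\times T}+\varepsilon_{B_U\times U}>1$ it is not that ``there is nothing to prove''---the inequality can genuinely fail (take $f_1=f_2=0$, both $\varepsilon$'s equal to $1$, and $|T|=0$). So your restriction to $\varepsilon_{B_T\times T}+\varepsilon_{B_U\times U}\le 1$ is a necessary hypothesis that the statement omits, not a harmless reduction.
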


\begin{proof}
	By \eqref{df1} and \eqref{df2}, we obtain
	\begin{equation}
	\begin{aligned}
	&\left\|
	f-\tau_{B_U\times U}\tau_{B_T\times T}f
	\right\|_{\Omega}\\
	\leq&
	\left\|
	f-\tau_{B_T\times T}f
	\right\|_{\Omega}
	+
	\left\|
	\tau_{B_T\times T}f
	-
	\tau_{B_U\times U}\tau_{B_T\times T}f
	\right\|_{\Omega}  \\
	\leq&
	\varepsilon_{B_T\times T}
	+
	\varepsilon_{B_U\times U}.
	\end{aligned}
	\end{equation}
	
Applying the reverse triangle inequality, we obtain
	\begin{equation}
	\|f\|_{\Omega}
	-
	\left\|
	\tau_{B_U\times U}\tau_{B_T\times T}f
	\right\|_{\Omega}
	\leq
	\left\|
	f-\tau_{B_U\times U}\tau_{B_T\times T}f
	\right\|_{\Omega}.
	\end{equation}
	Since $\|f\|_{\Omega}=1$, it follows that
	\begin{equation}
	\left\|
	\tau_{B_U\times U}\tau_{B_T\times T}f
	\right\|_{\Omega}
	\geq
	1-\varepsilon_{B_T\times T}
	-\varepsilon_{B_U\times U}.
	\end{equation}

	Then by \eqref{qf}, we have
	\begin{equation}
	\label{64}
	\left\|
	\tau_{B_U\times U}\tau_{B_T\times T}
	\right\|^2
	\geq
	\left(
	1-\varepsilon_{B_T\times T}
	-\varepsilon_{B_U\times U}
	\right)^2.
	\end{equation}
	
Applying \eqref{th3}, we obtain
	\begin{equation}
	\frac{1}{2\pi |b|}
	|U||T|P(B_T\cap B_U)
	\geq
	\left(
	1-\varepsilon_{B_T\times T}
	-\varepsilon_{B_U\times U}
	\right)^2.
	\end{equation}
	This completes the proof.
\end{proof}
\begin{corollary}
	 Assume that $\|f\|_{\Omega}=1$. If $f(t,\eta)$ is $\varepsilon_{B_T\times T}$-concentrated on $B_T\times T$ and $F_{\alpha}(\rho,\eta)$ is	$\varepsilon_{B_U\times U}$-concentrated on $B_U\times U$, then the DSUP in the fractional Fourier transform domain is as follows
	\begin{equation}
	\frac{1}{2\pi |\sin\alpha|}
	|U||T|P(B_T\cap B_U)
	\geq
	\left(
	1-\varepsilon_{B_T\times T}
	-\varepsilon_{B_U\times U}
	\right)^2, \ \ \sin\alpha \neq 0.
	\end{equation}
\end{corollary}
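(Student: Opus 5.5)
This corollary is the special case of Theorem~\ref{thm4} obtained by fixing the parameter matrix
$A=\left[\begin{array}{@{}cc@{}}\cos\alpha & \sin\alpha\\ -\sin\alpha & \cos\alpha\end{array}\right]$.
The plan is to check that this specialization is legitimate and that every quantity appearing in Theorem~\ref{thm4} becomes its FrFT counterpart.

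\textbf{Step 1 (admissible matrix and kernel).} Since $\det A=\cos^2\alpha+\sin^2\alpha=1$, the matrix is admissible, and the hypothesis $\sin\alpha\neq 0$ is exactly the condition $b\neq 0$. Substituting $a=d=\cos\alpha$, $b=\sin\alpha$ into $K_A(t,u)$ gives
\[
\frac{1}{\sqrt{\mathrm{j}2\pi\sin\alpha}}\,
\mathrm{e}^{\mathrm{j}\frac{(t^2+u^2)\cos\alpha-2tu}{2\sin\alpha}}.
\]
This is precisely the kernel of Definition~\ref{FrFTrm}. Hence $L_A(u,\eta)=F_\alpha(u,\eta)$, with $u$ renamed $\rho$.

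\textbf{Step 2 (operators and concentration).} With the same substitution, the frequency-limiting operator \eqref{fl} becomes the FrFT band-limiting operator. Its inner integral is the inverse FrFT of order $-\alpha$, restricted to $U$. Consequently, the hypothesis that $F_\alpha$ is $\varepsilon_{B_U\times U}$-concentrated is literally the hypothesis that $L_A$ is $\varepsilon_{B_U\times U}$-concentrated for this choice of $A$.

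\textbf{Step 3 (apply the theorem).} Theorem~\ref{thm4} now applies verbatim. Its left-hand side $\frac{1}{2\pi|b|}|U||T|P(B_T\cap B_U)$ becomes $\frac{1}{2\pi|\sin\alpha|}|U||T|P(B_T\cap B_U)$, which is the claimed inequality. Equivalently, one may follow the chain Lemma~\ref{lem3} $\Rightarrow$ \eqref{th3} $\Rightarrow$ \eqref{64} with $|b|=|\sin\alpha|$.

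\textbf{Main subtlety.} The only point needing care is the normalization of the kernel in \eqref{fl}. The factor $1/\sqrt{-\mathrm{j}2\pi b}$ must match the inverse FrFT for both signs of $\sin\alpha$. This is needed so that the product of the two normalizing constants is $1/(2\pi|\sin\alpha|)$, as used in Lemma~\ref{lem3}.

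That holds because
\[
\sqrt{\mathrm{j}2\pi b}\,\sqrt{-\mathrm{j}2\pi b}
\]
has modulus $2\pi|b|$ for either sign of $b$, using the principal branch. Only the modulus of the constants enters the Hilbert–Schmidt norm, so the phase convention does not matter. This completes the argument.
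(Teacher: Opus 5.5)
Your reduction is the route the paper intends: the corollary is Theorem~\ref{thm4} with $a=d=\cos\alpha$, $b=\sin\alpha$. Your identification of the kernel, and of \eqref{fl} as the order $-\alpha$ inverse FrFT, is right. One refinement: $\sqrt{\mathrm{j}2\pi b}\,\sqrt{-\mathrm{j}2\pi b}$ is exactly $2\pi|b|$ for the principal branch, not just in modulus. That matters beyond the Hilbert--Schmidt computation, because it makes \eqref{fl} the exact inverse and hence $\tau_{B_U\times U}$ an orthogonal projection.

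The genuine problem is Step~3. Theorem~\ref{thm4} cannot be used as a black box because it is false, and so is the corollary. Take $\Omega=[0,1]$ with Lebesgue measure, $B_T=B_U=[0,p]$, $T=U=[-5,5]$, $g(t)=\pi^{-1/4}\mathrm{e}^{-t^2/2}$, and $f(t,\eta)=p^{-1/2}\chi_{[0,p]}(\eta)g(t)$. Then $\|f\|_{L^2(\mathbb{R}\times\Omega)}=1$. The Gaussian is an FrFT eigenfunction up to a unimodular constant, so $|F_\alpha(\rho,\eta)|=p^{-1/2}\chi_{[0,p]}(\eta)|g(\rho)|$ for every $\alpha$ with $\sin\alpha\neq0$. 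Choosing $f_1=\chi_{B_T\times T}f$ and $f_2=\chi_{B_U\times U}F_\alpha$, both concentration errors equal $\big(\int_{|t|>5}g^2\,dt\big)^{1/2}<10^{-5}$. So the right-hand side exceeds $0.99$, while the left-hand side is $\frac{100\,p}{2\pi|\sin\alpha|}$, which tends to $0$ as $p\to0$.

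The failure is in Lemma~\ref{lem}. The operator $Q=\tau_{B_U\times U}\tau_{B_T\times T}$ acts fiberwise: $(Qf)(\cdot,\eta)=\chi_{B_T\cap B_U}(\eta)\,Q_0f(\cdot,\eta)$ for one fixed operator $Q_0$ on $L^2(\mathbb{R})$. Testing on $\chi_{B_T\cap B_U}(\eta)h(t)$ shows that its norm on $L^2(\mathbb{R}\times\Omega)$ equals $\|Q_0\|$ as soon as $P(B_T\cap B_U)>0$, however small that probability is. Cauchy--Schwarz only gives $\|Qf\|^2\le\int_\Omega\|q(\eta,\cdot,\cdot)\|_{L^2(\mathbb{R}^2)}^2\,\|f(\cdot,\eta)\|_{L^2(\mathbb{R})}^2\,dP$. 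Passing from this to $\|Q\|_{HS}^2\|f\|^2$ would require $\mathbb{E}[XY]\le\mathbb{E}[X]\,\mathbb{E}[Y]$. The correct bound is $\|Q\|\le\mathrm{ess\,sup}_{\eta}\|q(\eta,\cdot,\cdot)\|_{L^2(\mathbb{R}^2)}$.

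Run the Donoho--Stark argument with that bound, and split the triangle inequality as $\|f-\tau_{B_U\times U}f\|+\|\tau_{B_U\times U}(f-\tau_{B_T\times T}f)\|$. (The paper's split leaves $\|(I-\tau_{B_U\times U})\tau_{B_T\times T}f\|$, which $\varepsilon_{B_U\times U}$ does not control.) This yields $\frac{|U||T|}{2\pi|\sin\alpha|}\ge(1-\varepsilon_{B_T\times T}-\varepsilon_{B_U\times U})^2$ whenever $P(B_T\cap B_U)>0$, i.e.\ without the probability factor. The factor $P(B_T\cap B_U)$ can be kept only under an extra hypothesis such as $\|f(\cdot,\eta)\|_{L^2(\mathbb{R})}=1$ for a.e.\ $\eta$. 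In that case one estimates $\|Qf\|^2\le\|Q_0\|^2P(B_T\cap B_U)$ directly for that $f$, rather than through an operator norm.
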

\begin{corollary}
Assume that $\|f\|_{\Omega}=1$. If $f(t,\eta)$ is $\varepsilon_{B_T\times T}$-concentrated on $B_T\times T$ and its Fourier transform (FT) $\hat{f}(\xi,\eta)$ is $\varepsilon_{B_U\times U}$-concentrated on $B_U\times U$, then the DSUP in the FT domain is as follows
	\begin{equation}
	\frac{|U||T|}{2\pi}
	P(B_T\cap B_U)
	\geq
	\left(
	1-\varepsilon_{B_T\times T}
	-\varepsilon_{B_U\times U}
	\right)^2 .
	\label{FTDS}
	\end{equation}
The DSUP in \eqref{DSFT} is formulated in the normalized FT domain, and its form is essentially consistent with the result in \eqref{FTDS}.
\end{corollary}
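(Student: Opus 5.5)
The plan is to specialize Theorem~\ref{thm4} to the Fourier parameter matrix
$A=\left[\begin{smallmatrix}0&1\\-1&0\end{smallmatrix}\right]$, for which $a=d=0$, $b=1$, $c=-1$, so that $b\neq 0$ and $1/(2\pi|b|)=1/(2\pi)$. The work lies in verifying that, with this choice, the objects appearing in Theorem~\ref{thm4} coincide (up to harmless unimodular factors) with those in the statement.

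First I will compute the kernel. With $a=d=0$, $b=1$, we get $K_A(t,u)=\frac{1}{\sqrt{\mathrm{j}2\pi}}\mathrm{e}^{-\mathrm{j}tu}$, hence $L_A(u,\eta)=\mathrm{e}^{-\mathrm{j}\pi/4}\hat f(u,\eta)$ by Definition~\ref{FT}. Next I will check that concentration is preserved by this constant phase. If $\hat f$ is $\varepsilon_{B_U\times U}$-concentrated via some $f_2$ supported on $B_U\times U$, then $\mathrm{e}^{-\mathrm{j}\pi/4}f_2$ is supported on the same set. Moreover,
\[
\|L_A-\mathrm{e}^{-\mathrm{j}\pi/4}f_2\|_{\Omega}=\|\hat f-f_2\|_{\Omega}\le\varepsilon_{B_U\times U}.
\]
So $L_A$ is $\varepsilon_{B_U\times U}$-concentrated on $B_U\times U$. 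The time-domain hypothesis and the normalization $\|f\|_{\Omega}=1$ do not depend on $A$ and carry over unchanged. It is also worth noting that the frequency-limiting operator \eqref{fl} becomes the ordinary inverse-FT band-limiting operator, so Lemma~\ref{lem3} gives $\|\tau_{B_U\times U}\tau_{B_T\times T}\|_{HS}^2=|U||T|P(B_T\cap B_U)/(2\pi)$. This is consistent with the claim, though it is not logically needed once Theorem~\ref{thm4} is invoked.

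Applying Theorem~\ref{thm4} with $|b|=1$ then yields \eqref{FTDS} directly. The only mildly delicate point is the phase bookkeeping: the factor $1/\sqrt{\mathrm{j}2\pi b}$ versus $1/\sqrt{2\pi}$. It contributes a modulus-one constant, which leaves all norms and supports unchanged, so I expect no real obstacle.

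For the closing remark comparing with \eqref{DSFT}, I would observe how that result is normalized. It uses the unitary FT $\int f(t)\mathrm{e}^{-2\pi\mathrm{j}t\xi}\,dt$, whose frequency variable is $\xi=u/(2\pi)$. Under this rescaling a set $U$ in the $u$-variable corresponds to $W=U/(2\pi)$ with $|W|=|U|/(2\pi)$, and \eqref{FTDS} becomes exactly \eqref{DSFT}.
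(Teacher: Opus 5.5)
Your proposal is correct and follows the paper's route: the corollary is simply Theorem~\ref{thm4} specialized to the Fourier parameter matrix ($a=d=0$, $b=1$), so that $1/(2\pi|b|)=1/(2\pi)$. Your explicit checks supply bookkeeping the paper leaves implicit: that $L_A=\mathrm{e}^{-\mathrm{j}\pi/4}\hat f$ transfers the concentration hypothesis unchanged, and that the rescaling $W=U/(2\pi)$ turns \eqref{FTDS} into \eqref{DSFT}.
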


\begin{defn}
	Let $n(t,\eta)$ denote a random noise. The observed noisy random signal $r(t,\eta)$ is defined by
	\begin{equation}
	\label{r}
	r(t,\eta)
	=
	\begin{cases}
	f(t,\eta)+n(t,\eta), & (t,\eta)\in (B_T\times T)^c,\\
	0, & (t,\eta)\in B_T\times T.
	\end{cases}
	\end{equation}
\end{defn}
A stable recovery result for random signals in the LCT domain is presented in the following theorem.
\begin{thm}
Assume that $|U||T|P(B_T\cap B_U) < 2\pi |b|, b \neq 0$.
If there exists a linear operator $Q$ satisfying $
\|f-Qr\|_{\Omega}
\leq
C\|n\|_{\Omega}$,
where
$
C
\leq
\left(
1-
\sqrt{
	\frac{|U||T|P(B_T\cap B_U)}
	{2\pi |b|}
}
\right)^{-1},
$
then the random signal $f(t,\eta)$ can be recovered uniquely from $r(t,\eta)$.
\end{thm}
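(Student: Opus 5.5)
This follows the classical Donoho--Stark recovery argument, transplanted to $L^2(\mathbb{R}\times\Omega)$ with the LCT band-limiting operator. I read the hypothesis as saying that $f$ is frequency-limited to $B_U\times U$ in the LCT domain, so that $\tau_{B_U\times U}f=f$. Here $\tau_{B_U\times U}$ is the projection of \eqref{fl}: the inverse LCT of the restriction of $L_A$ to $U$, multiplied by $\chi_{B_U}$. The unobserved part of $f$ is $\tau_{B_T\times T}f$, and the observation is $r=(I-\tau_{B_T\times T})(f+n)$.

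The first step is uniqueness in the noiseless sense. Suppose $f$ and $g$ are both frequency-limited to $B_U\times U$ and agree on $(B_T\times T)^c$. Then $h=f-g$ satisfies $h=\tau_{B_T\times T}h$ and $h=\tau_{B_U\times U}h$. Hence $h=\tau_{B_U\times U}\tau_{B_T\times T}h$, and by \eqref{th3},
\[
\|h\|_{\Omega}\le \sqrt{\tfrac{|U||T|P(B_T\cap B_U)}{2\pi|b|}}\;\|h\|_{\Omega}.
\]
The hypothesis $|U||T|P(B_T\cap B_U)<2\pi|b|$ makes the constant $\gamma:=\sqrt{|U||T|P(B_T\cap B_U)/(2\pi|b|)}$ strictly less than $1$, so $h=0$. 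The same bound shows that $I-\tau_{B_U\times U}\tau_{B_T\times T}$ is invertible on $L^2(\mathbb{R}\times\Omega)$ by a Neumann series, with $\|(I-\tau_{B_U\times U}\tau_{B_T\times T})^{-1}\|\le (1-\gamma)^{-1}$.

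The second step constructs $Q$ and verifies the stability estimate. Define
\[
Q=(I-\tau_{B_U\times U}\tau_{B_T\times T})^{-1}\tau_{B_U\times U}.
\]
For noiseless data $r_0=f-\tau_{B_T\times T}f$ we have $\tau_{B_U\times U}r_0=f-\tau_{B_U\times U}\tau_{B_T\times T}f$, so $Qr_0=f$. For noisy data, $Qr-f=Q\big((I-\tau_{B_T\times T})n\big)$. Since $\tau_{B_U\times U}$ and $I-\tau_{B_T\times T}$ are contractions (the former by the LCT Parseval theorem), we get $\|f-Qr\|_{\Omega}\le (1-\gamma)^{-1}\|n\|_{\Omega}$. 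This is exactly the stated constant $C$. Finally, any linear $Q$ satisfying the stated bound is determined on noiseless data, because setting $n=0$ forces $Qr=f$; combined with the first step, this gives unique recovery.

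The main obstacle is the interpretation rather than the estimates. One must fix precisely how ``recovered uniquely'' and the frequency-limitation of $f$ are meant in this random setting, since the theorem leaves these implicit. One must also confirm that $\tau_{B_U\times U}$ is a genuine orthogonal projection, so that it has norm at most $1$. I would verify this first, using the unitarity of the LCT on each fibre $\eta$ and then integrating over $\Omega$. Everything else reduces to \eqref{th3} and a Neumann series.
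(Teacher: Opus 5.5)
Your argument is correct and follows the paper's route: uniqueness comes from the norm bound \eqref{th3}, which the paper phrases as Theorem~\ref{thm4} applied to the difference with zero concentration errors, and recovery comes from a Neumann-series inverse giving the constant $(1-\gamma)^{-1}$. The only variation is your $Q=(I-\tau_{B_U\times U}\tau_{B_T\times T})^{-1}\tau_{B_U\times U}=\tau_{B_U\times U}(I-\tau_{B_T\times T}\tau_{B_U\times U})^{-1}$, i.e.\ the paper's $Q$ followed by band-limiting. This lets you use \eqref{th3} directly and handle the noise on $B_T\times T$ through $r=(I-\tau_{B_T\times T})(f+n)$, whereas the paper tacitly writes $r=(I-\tau_{B_T\times T})f+n$ and also states only $\le 1$ where strict inequality is needed.
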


\begin{proof}
First, we prove uniqueness. Suppose that there exists another random signal $f^\star(t,\eta)$ that is also recovered from $r(t,\eta)$. Define the difference between the two signals by
	\begin{equation}
	l(t,\eta)=f(t,\eta)-f^\star(t,\eta).
	\end{equation}
	Since both $f(t,\eta)$ and $f^\star(t,\eta)$ coincide with the observation
	on $(B_T\times T)^c$, we have
	\begin{equation}
	l(t,\eta)=0,
	\qquad (t,\eta)\in (B_T\times T)^c .
	\end{equation}
	Hence, $l(t,\eta)$ is supported on $B_T\times T$. 
	Moreover, since both signals are frequency-limited to $B_U\times U$ in the LCT domain, it follows that
	\begin{equation}
	\tau_{B_U\times U}l
	=
	\tau_{B_U\times U}(f-f^\star)
	=
	f-f^\star
	=
	l.
	\end{equation}
	Thus, $l$ is also frequency-limited to $B_U\times U$ in the LCT domain.
	
	If $l(t,\eta)$ were not identically zero, by the Theorem \ref{thm4} to $l(t,\eta)$, we have
	\begin{equation*}
\frac{1}{2\pi |b|}
|U||T|P(B_T\cap B_U)
\geq
\left(
1-\varepsilon_{B_T\times T}
-\varepsilon_{B_U\times U}
\right)^2.
\end{equation*}
	This contradicts the assumption $|U||T|P(B_T\cap B_U) < 2 \pi |b| $.
	Therefore,
	\begin{equation}
	l(t,\eta)=0,
	\quad (t,\eta)\in\mathbb{R}\times\Omega,
	\end{equation}
	and hence $f^\star(t,\eta)=f(t,\eta)$ almost everywhere. This proves uniqueness.
	
	We next construct the recovery operator. By the \eqref{th3}, we obtain
	\begin{equation}
	\left\|
	\tau_{B_T\times T}\tau_{B_U\times U}
	\right\|
	=
	\left\|
	\tau_{B_U\times U}\tau_{B_T\times T}
	\right\|
	\leq
	\sqrt{
\frac{1}{2\pi |b|}
|U||T|P(B_T\cap B_U)}
\leq 1.
	\end{equation}
	Consequently, the operator
	$I-\tau_{B_T\times T}\tau_{B_U\times U}$ is invertible, and we define
	\begin{equation}
	Q=
	\left(
	I-\tau_{B_T\times T}\tau_{B_U\times U}
	\right)^{-1}.
	\end{equation}
	
	Since
	\begin{equation}
	\left(	I-\tau_{B_T\times T}\right) f(t,\eta) = \left(
	I-\tau_{B_T\times T}\tau_{B_U\times U}
	\right) f(t,\eta) ,
	\end{equation}
	we have
	\begin{equation}
	\begin{aligned}
	&f(t,\eta)-Qr(t,\eta)\\
	=&
	f(t,\eta)
	-
	Q(I-\tau_{B_T\times T})f(t,\eta)
	-
	Qn(t,\eta) \\[6pt]
	=&
	f(t,\eta)
	-
	\left(
	I-\tau_{B_T\times T}\tau_{B_U\times U}
	\right)^{-1}
	\left(
	I-\tau_{B_T\times T}\tau_{B_U\times U}
	\right)f(t,\eta)
	-
	Qn(t,\eta) \\[6pt]
	=&
	-Qn(t,\eta).
	\end{aligned}
\end{equation}
	Therefore, we obtain
	\begin{equation}
\begin{aligned}
	&\|f(t,\eta)-Qr(t,\eta)\|_{\Omega}\\
	=&
	\|Qn(t,\eta)\|_{\Omega} \\
	\leq&
	\|Q\|\,\|n(t,\eta)\|_{\Omega} \\
	\leq&
	\left(
	1-
	\left\|
	\tau_{B_T\times T}\tau_{B_U\times U}
	\right\|
	\right)^{-1}
	\|n(t,\eta)\|_{\Omega} \\
	\leq&
	\left(
	1-
	\left\|
	\tau_{B_T\times T}\tau_{B_U\times U}
	\right\|_{HS}
	\right)^{-1}
	\|n(t,\eta)\|_{\Omega} \\
	=&
	\left(
	1-
	\sqrt{
	\frac{1}{2\pi |b|}
		|U||T|P(B_T\cap B_U)}
	\right)^{-1}
	\|n(t,\eta)\|_{\Omega}.
	\end{aligned}
\end{equation}
\end{proof}

The following theorem further establishes the stability of random signal recovery under noisy conditions.
\begin{thm}
	Suppose that $|U||T|P(B_T\cap B_U) < 2 \pi |b|, b \neq 0 $,  and let the noise satisfy
	\begin{equation}
	\label{n}
	\|n(t,\eta)\|_{\Omega}^2
	\leq
	\left(
	1-\frac{1}{2\pi |b|}|T||U|P(B_T\cap B_U)
	\right)\frac{\delta}{4}.
	\end{equation}
If the reconstructed random signal $\widetilde{f}(t,\eta)$, obtained from the noisy observation $r(t,\eta)$, satisfies\textbf{}
	\begin{equation}
	\label{r-}
	\left\|
	r(t,\eta)
	-
	\tau_{(B_T\times T)^c}\widetilde{f}(t,\eta)
	\right\|_{\Omega}^2
	\leq
	\left(	1-\frac{1}{2\pi |b|}|T||U|P(B_T\cap B_U)\right)
		\frac{\delta}{4},
	\end{equation}
	then the reconstruction error satisfies
	\begin{equation}
	\left\|
	f(t,\eta)-\widetilde{f}(t,\eta)
	\right\|_{\Omega}^2
	\leq
	\delta.
	\end{equation}
\end{thm}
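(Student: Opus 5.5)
The plan is to control the error $g=f-\widetilde f$ by splitting it into its part on $(B_T\times T)^c$, which the data constrain, and its part on $B_T\times T$, which the data do not see. The missing part is then recovered from the observed part using the operator bound \eqref{th3}, in the form of an energy inequality for band-limited signals. Throughout I assume, as in the preceding recovery theorem, that both $f$ and $\widetilde f$ are frequency-limited to $B_U\times U$ in the LCT domain. Then $g$ is frequency-limited as well, so $\tau_{B_U\times U}g=g$.

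\textbf{Step 1 (energy inequality).} For a frequency-limited $g$ I would establish
\begin{equation*}
\|\tau_{B_T\times T}g\|_{\Omega}^2=\|\tau_{B_T\times T}\tau_{B_U\times U}g\|_{\Omega}^2\le \frac{|U||T|P(B_T\cap B_U)}{2\pi|b|}\,\|g\|_{\Omega}^2 .
\end{equation*}
This requires the operator norm of $\tau_{B_T\times T}\tau_{B_U\times U}$. It equals the norm of its adjoint $\tau_{B_U\times U}\tau_{B_T\times T}$, because both limiting operators are orthogonal projections: for $\tau_{B_U\times U}$ this follows from LCT Parseval. The adjoint is bounded in \eqref{th3}.

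\textbf{Step 2 (lower bound on the observed part).} Write $\kappa=\frac{1}{2\pi|b|}|T||U|P(B_T\cap B_U)$, so $\kappa<1$ by hypothesis. The decomposition $\|g\|^2=\|\tau_{B_T\times T}g\|^2+\|\tau_{(B_T\times T)^c}g\|^2$ combined with Step 1 gives
\begin{equation*}
\|g\|_{\Omega}^2\le \frac{1}{1-\kappa}\,\|\tau_{(B_T\times T)^c}g\|_{\Omega}^2 .
\end{equation*}

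\textbf{Step 3 (upper bound on the observed part).} By \eqref{r}, on $(B_T\times T)^c$ we have $\tau_{(B_T\times T)^c}f=r-\tau_{(B_T\times T)^c}n$, and the noise can be taken to live there. Hence
\begin{equation*}
\|\tau_{(B_T\times T)^c}g\|_{\Omega}\le \|r-\tau_{(B_T\times T)^c}\widetilde f\|_{\Omega}+\|n\|_{\Omega}.
\end{equation*}
Applying $(x+y)^2\le 2x^2+2y^2$ together with \eqref{n} and \eqref{r-} yields $\|\tau_{(B_T\times T)^c}g\|^2\le (1-\kappa)\delta$. Inserting this into Step 2 gives $\|g\|^2\le \delta$.

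\textbf{Main obstacle.} The constants should be checked first. The stated factor $(1-\kappa)\frac{\delta}{4}$ appears designed for exactly this $2x^2+2y^2$ bookkeeping, which gives $2\cdot 2\cdot\frac{\delta}{4}(1-\kappa)=(1-\kappa)\delta$. That factor is then cancelled by the $\frac{1}{1-\kappa}$ from Step 2 (or from $(1-\sqrt\kappa)^{-2}$ if I instead use the triangle-inequality route of the previous theorem). If the constants fail to match, I would state the result with $1-\sqrt\kappa$ in place of $1-\kappa$.

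The more substantive gap is the band-limitedness of $\widetilde f$. The theorem does not state it, so I would have to add it as a standing assumption implicit in the recovery setting. I would also verify that $\tau_{B_U\times U}$ is self-adjoint on $L^2(\mathbb R\times\Omega)$, which is needed for the norm identity in Step 1.
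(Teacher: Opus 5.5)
Your proof is correct and follows the paper's argument. Both split $f-\widetilde f$ orthogonally over $B_T\times T$ and its complement. Both bound the unobserved part by $\kappa\|f-\widetilde f\|_{\Omega}^2$ using $\tau_{B_T\times T}(f-\widetilde f)=\tau_{B_T\times T}\tau_{B_U\times U}(f-\widetilde f)$ and \eqref{th3}, bound the observed part by $(1-\kappa)\delta$ from the two hypotheses, and finish by rearranging with $\kappa<1$. Your two explicit additions, the frequency-limitedness of $\widetilde f$ and the adjoint argument giving $\|\tau_{B_T\times T}\tau_{B_U\times U}\|=\|\tau_{B_U\times U}\tau_{B_T\times T}\|$, supply steps the paper uses without stating them.
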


\begin{proof}
	First, we decompose the reconstruction error as
	\begin{equation}
	\begin{aligned}
	\left\|
	f(t,\eta)-\widetilde{f}(t,\eta)
	\right\|_{\Omega}^2
	&=
	\left\|
	\tau_{(B_T\times T)^c}
	\left(
	f(t,\eta)-\widetilde{f}(t,\eta)
	\right)
	\right\|_{\Omega}^2 \\
	&\quad+
	\left\|
	\left(
	I-\tau_{(B_T\times T)^c}
	\right)
	\left(
	f(t,\eta)-\widetilde{f}(t,\eta)
	\right)
	\right\|_{\Omega}^2 .
	\end{aligned}
	\end{equation}
	
	For the first term, by \eqref{r}, \eqref{n} and \eqref{r-}, we have
	\begin{equation}
	\label{4.41}
	\begin{aligned}
	&\left\|
	\tau_{(B_T\times T)^c}
	\left(
	f(t,\eta)-\widetilde{f}(t,\eta)
	\right)
	\right\|_{\Omega}^2 \\
	&=
	\left\|
	\tau_{(B_T\times T)^c}
	\left(
	r(t,\eta)-n(t,\eta)
	\right)
	-
	\tau_{(B_T\times T)^c}\widetilde{f}(t,\eta)
	\right\|_{\Omega}^2 \\[6pt]
	&\leq
	\left(
	\left\|
	r(t,\eta)
	-
	\tau_{(B_T\times T)^c}\widetilde{f}(t,\eta)
	\right\|_{\Omega}
	+
	\left\|
	n(t,\eta)
	\right\|_{\Omega}
	\right)^2 \\[6pt]
	&\leq
	\left(
	1-\frac{1}{2\pi |b|}|U||T|P(B_T\cap B_U)
	\right)\delta .
	\end{aligned}
	\end{equation}
	
	For the second term, since $I-\tau_{(B_T\times T)^c}=\tau_{B_T\times T}$, and by using \eqref{th3}, we obtain
	\begin{equation}
	\label{4.42}
	\begin{aligned}
	&\left\|
	\left(
	I-\tau_{(B_T\times T)^c}
	\right)
	\left(
	f(t,\eta)-\widetilde{f}(t,\eta)
	\right)
	\right\|_{\Omega}^2 \\
	&=
	\left\|
	\left(
	I-\tau_{(B_T\times T)^c}
	\right)
	\tau_{B_U\times U}
	\left(
	f-\widetilde{f}
	\right)(t,\eta)
	\right\|_{\Omega}^2 \\[6pt]
	&=
	\left\|
	\tau_{B_T\times T}\tau_{B_U\times U}
	\left(
	f-\widetilde{f}
	\right)(t,\eta)
	\right\|_{\Omega}^2 \\[6pt]
	&\leq
	\left\|
	\tau_{B_T\times T}\tau_{B_U\times U}
	\right\|^2
	\left\|
	f(t,\eta)-\widetilde{f}(t,\eta)
	\right\|_{\Omega}^2 \\[6pt]
	&\leq
	\frac{1}{2 \pi |b|}
	|T||U|P(B_T\cap B_U)
	\left\|
	f(t,\eta)-\widetilde{f}(t,\eta)
	\right\|_{\Omega}^2 .
	\end{aligned}
	\end{equation}
	
	Combining \eqref{4.41} and \eqref{4.42}, we have
	\begin{equation}
	\begin{aligned}
	\left\|
	f(t,\eta)-\widetilde{f}(t,\eta)
	\right\|_{\Omega}^2
	&\leq
	\left(
	1-\frac{1}{2\pi |b|}|U||T|P(B_T\cap B_U)
	\right)\delta \\
	&\quad+
\frac{1}{2\pi |b|}
	|T||U|P(B_T\cap B_U)
	\left\|
	f(t,\eta)-\widetilde{f}(t,\eta)
	\right\|_{\Omega}^2 .
	\end{aligned}
	\end{equation}
	Therefore, we get
	\begin{equation}
	\left\|
	f(t,\eta)-\widetilde{f}(t,\eta)
	\right\|_{\Omega}^2
	\leq
	\delta .
	\end{equation}
	This completes the proof.
\end{proof}

\section{Conclusion}
\label{sec5}
In this paper, we investigate uncertainty principles for random signals in the linear canonical transform (LCT) domain. By formulating the LCT of random signals on the probability space, we establish the corresponding Heisenberg and Donoho-Stark uncertainty principles. Both principles reveal that the uncertainty bounds depend jointly on the random signals and the parameters of the LCT, thereby providing additional degrees of freedom in controlling the balance between time and frequency localization.
Since the LCT provides a unified generalization of the Fourier transform and the fractional Fourier transform, the corresponding results in these two domains are also presented as special cases. The findings of this work are of both theoretical and practical significance, particularly in characterizing time-frequency localization and the recovery of random signals from incomplete or partially missing observations.

\section*{Acknowledgment}
This work was supported by grants from the National Natural Science Foundation of China [No. 62571042] and Natural Science Foundation of Beijing Municipality [No. 4242011].

\bibliographystyle{elsarticle-num}	
\bibliography{mybibfile}

@article{dang2023,
	title={Uncertainty principles for random signals},
	author={Dang, Pei and Li, Chuang and Mai, Wei Xiong and Pan, Wen Liang},
	journal={Appl. Math. Comput.},
	volume={444},
	pages={127833},
	year={2023},
	publisher={Elsevier}
}

@article{heisenberg,
	title={{\"U}ber den anschaulichen Inhalt der quantentheoretischen Kinematik und Mechanik},
	author={Heisenberg, Werner},
	journal={Zeitschrift f{\"u}r Physik},
	volume={43},
	number={3},
	pages={172--198},
	year={1927},
	publisher={Springer}
}

@article{dsu,
	title={Uncertainty principles and signal recovery},
	author={Donoho, David L and Stark, Philip B},
	journal={SIAM J. Appl. Math.},
	volume={49},
	number={3},
	pages={906--931},
	year={1989},
	publisher={SIAM}
}

@book{HUN,
	title={Time-frequency transforms for radar imaging and signal analysis},
	author={Chen, Victor C and Ling, Hao},
	year={2002},
	publisher={Artech house}
}

@article{DStwo,
	title={Two aspects of the {Donoho-Stark} uncertainty principle},
	author={Boggiatto, Paolo and Carypis, Evanthia and Oliaro, Alessandro},
	journal={J. Math. Anal. Appl.},
	volume={434},
	number={2},
	pages={1489--1503},
	year={2016},
	publisher={Elsevier}
}

@article{DSE,
	title = {Uncertainty principle in random quaternion domains},
	journal = {Digit. Signal Prog.},
	volume = {136},
	pages = {103988},
	year = {2023},
	issn = {1051-2004},
	author = {Pei Dang and Wei Xiong Mai and Wen Liang Pan},
	publisher={Elsevier}
}

@article{hw,
	title={On the {Heisenberg-Weyl} inequality},
	author={Rassias, John Michael},
	journal={J. Inequ. Pure \&  Appl. Math},
	volume={6},
	year={2005},
	number={1},
	pages={1--18}
}

@article{uncertaintyhj,
	title={Uncertainty principles for the offset linear canonical transform},
	author={Huo, Hai Ye},
	journal={Circuits Syst. Signal Process.},
	volume={38},
	number={1},
	pages={395--406},
	year={2019},
	publisher={Springer}
}

@article{uwlct,
	title={Uncertainty principles for the windowed offset linear canonical transform},
	author={Gao, Wen Biao and Li, Bing Zhao},
	journal={Int. J. Wavelets Multiresolut. Inf. Process.},
	volume={20},
	number={01},
	pages={2150042},
	year={2022},
	publisher={World Scientific}
}

@article{RandomFrFt,
	title = {Sampling random signals in a fractional {Fourier} domain},
	journal = {Signal Process.},
	volume = {91},
	number = {6},
	pages = {1394-1400},
	year = {2011},
	issn = {0165-1684},
	author = {Ran Tao and Feng Zhang and Yue Wang},
	publisher={Elsevier}
}

@article{xurandom,
	title={Analysis of {A}-stationary random signals in the linear canonical transform domain},
	author={Xu, Shui Qing and Feng, Li and Chai, Yi and He, Yi Gang},
	journal={Signal Process.},
	volume={146},
	pages={126--132},
	year={2018},
	publisher={Elsevier}
}

@article{dslp,
	title={{$L^{p}$}-{Donoho-Stark} principle for {Jacobi-Dunkl} transform},
	author={Safouane, Najat and Achak, Azzedine and Loualid, El Mehdi},
	journal={Sao Paulo J. Math. Sci.},
	volume={19},
	number={1},
	pages={9},
	year={2025},
	publisher={Springer}
}

@book{LCT,
	title={Linear canonical transforms},
	author={Healy, John J and Kutay, M Alper and Ozaktas, Haldun M and Sheridan, John T},
	journal={Springer Series in Optical Sciences},
	pages={453},
	year={2016},
	publisher={Springer}
}

@article{FrFtLp,
	title={{$L^{p}$}-type {Heisenberg-Pauli-Weyl} uncertainty principles for fractional {Fourier} transform},
	author={Chen, Xuan and Dang, Pei and Mai, Wei Xiong},
	journal={Appl. Math. Comput.},
	volume={491},
	pages={129236},
	year={2025},
	publisher={Elsevier}
}

@article{zhang2025H,
	title={Standard {Heisenberg’s} uncertainty principles of {Cohen’s} class time-frequency distribution with specific kernels},
	author={Zhang, Zhi Chao},
	journal={J. Pseudo-Differ. Oper. Appl.},
	volume={16},
	number={3},
	pages={67},
	year={2025},
	publisher={Springer}
}

@article{xulctup,
	title={Three uncertainty relations for real signals associated with linear canonical transform},
	author={Xu, Guan Lei and Xiao Tong, W and Xiao Gang, X},
	journal={IET Signal Process.},
	volume={3},
	number={1},
	pages={85--92},
	year={2009},
	publisher={IET}
}

@article{mairandomlct,
	title={Uncertainty principles for linear canonical transform},
	author={Mai, Wei Xiong and Dang, Pei and Pan, Wen Liang and Chen, Xuan},
	journal={J. Math. Anal. Appl.},
	volume={548},
	number={2},
	pages={129414},
	year={2025},
	publisher={Elsevier}
}

@article{2026Suncertainty,
	title={On Uncertainty Principles Associated with a Generalized Stockwell Transform in Offset Fractional {Fourier} Transform},
	author={Bachtiar, Nasrullah and Nur, Andi Tenri Ajeng and Rivai, Muklas},
	journal={J. Math. Anal. Appl.},
	pages={130686},
	year={2026},
	publisher={Elsevier}
}

@article{2026Buncertainty,
	title={The uncertainty principles for the multidimensional {Fourier}-{Bessel} transform},
	author={Xu, Hui and Zhao, Xing Yu and Wei, Long Ben},
	journal={J. Math. Anal. Appl.},
	pages={130414},
	year={2026},
	publisher={Elsevier}
}

@article{tuan2010donoho,
	title={{Donoho}-{Stark} and {Paley}-{Wiener} theorems for the {G}-transform},
	author={Tuan, Vu Kim and Yakubovich, Semyon},
	journal={Adv. Appl. Math.},
	volume={45},
	number={1},
	pages={108--124},
	year={2010},
	publisher={Elsevier}
}

@article{peng20252p,
	title={The 2p order {Heisenberg}-{Pauli}-{Weyl} uncertainty principles related to the offset linear canonical transform},
	author={Peng, Jia Yin and Li, Bing Zhao},
	journal={Digit. Signal Prog.},
	pages={105746},
	year={2025},
	publisher={Elsevier}
}

@article{randomqt,
	Author = {Qian, Tao},
	Title = {Sparse representations of random signals},
	Journal = {Math. Meth. Appl. Sci.},
	Year = {2022},
	Volume = {45},
	Number = {8},
	Pages = {4210-4230},
	Month = {MAY 30},
	EarlyAccessDate = {DEC 2021},
	ISSN = {0170-4214},
	EISSN = {1099-1476},
	Unique-ID = {WOS:000734618200001},
}

@article{frftrandom,
	Author = {Torres, Rafael and Torres, Edmanuel},
	Title = {Fractional Fourier Analysis of Random Signals and the Notion of
	{$\alpha$}-Stationarity of the {Wigner-Ville} Distribution},
	Journal = {IEEE Trans. Signal Process.},
	Year = {2013},
	Volume = {61},
	Number = {6},
	Pages = {1555-1560},
	Month = {MAR},
	ISSN = {1053-587X},
	EISSN = {1941-0476},
	ORCID-Numbers = {Torres, Rafael/0000-0002-5739-4413},
	Unique-ID = {WOS:000315965500022},
}

\end{document}